\newtheorem{theorem}{Theorem}
\newtheorem{lemma}{Lemma}
\newtheorem{corollary}{Corollary}
\theoremstyle{definition}
\newtheorem{definition}{Definition}
\newcommand{\ord}{\operatorname{ord}}
\newcommand{\Div}{\mathop{Div}}
\newcommand{\di}{\operatorname{div}}
\newcommand{\CH}{\mathop{CH}}
\newcommand{\ann}{\operatorname{ann}}
\newcommand{\vol}{\operatorname{vol}}
\newcommand{\rk}{\operatorname{rk}}
\newcommand{\ts}{\textstyle}
\newcommand{\wDiv}{\widehat{\Div}}
\newcommand{\wdiv}{\widehat{\di}}
\newcommand{\wh}{\widehat}
\newcommand{\End}{\operatorname{End}}
\newcommand{\Spec}{\operatorname{Spec}}
\newcommand{\map}{\rightarrow}
\newcommand{\Map}{\longrightarrow}
\newcommand{\ol}{\overline}
\DeclareMathOperator{\adeg}{\widehat{deg}}
\DeclareMathOperator{\tr}{tr}
\begin{document}

\title{Arakelov theory of noncommutative arithmetic curves}

\author{Thomas Borek\footnote{Departement Mathematik, ETH Z\"urich, R\"amistrasse 101, 8092 Z\"urich, Switzerland; borek@math.ethz.ch. } }

\maketitle

\begin{abstract}
The purpose of this article is to initiate Arakelov theory in a noncommutative setting. More precisely, we are concerned with Arakelov theory of noncommutative arithmetic curves. A noncommutative arithmetic curve is the spectrum of a $\mathbb Z$-order $\mathcal O$ in a finite dimensional semisimple $\mathbb Q$-algebra. Our first main result is an arithmetic Riemann-Roch formula in this setup. We proceed with introducing the Grothendieck group $\widehat{K}_0(\mathcal O)$ of arithmetic vector bundles on a noncommutative arithmetic curve $\Spec\mathcal O$ and show that there is a uniquely determined degree map $\adeg_\mathcal O:\widehat{K}_0(\mathcal O)\map\mathbb R,$ which we then use to define a height function $H_\mathcal O.$ We prove a duality theorem for the height $H_\mathcal O.$
\end{abstract}

\textbf{Keywords:} Orders; Semisimple algebras; Arakelov theory; Arithmetic curves; Heights.

\section{Introduction}

In her thesis [3, 4], Liebend\"orfer studied the question whether it is possible to obtain some kind of Siegel's Lemma also in a situation where the coefficients are taken from a (possibly) noncommutative division algebra of finite dimension over $\mathbb Q.$ She establishes a version of Siegel's Lemma when the coefficients lie in a positive definite rational quaternion algebra $D$ and the solution vectors belong to a maximal order in $D.$ In particular, she introduces heights of matrices with coefficients in $D.$

Our work was initially motivated by the question: can we establish some kind of Arakelov theory of noncommutative arithmetic curves which enables us to reformulate the results of Liebend\"orfer? To answer this question, first of all we have to give a definition of noncommutative arithmetic curves. We did this by asking what conditions we have to impose on a possible candidate in order to obtain a nice Arakelov theory for it. It turns out that for our purposes semisimplicity is the right assumption. This has mainly two reasons. Firstly, given a finite dimensional algebra $A$ over a number field $K,$ the trace form $\tr_{A\mid K}:A\times A\map K,$ $(a,b)\mapsto\tr_{A\mid K}(ab)$ is nondegenerate if and only if $A$ is semisimple. The non-degeneracy of the trace form is crucial in order to get a reasonable measure on the associated real algebra $A_\mathbb R=A\otimes_\mathbb Q\mathbb R.$ This so-called canonical measure on $A_\mathbb R$ is essential in Arakelov theory. Secondly, semisimple algebras over number fields enjoy the important property that their orders are one-dimensional in the sense that prime ideals are maximal. This enables us to prove a product formula in this setting. Without some kind of product formula it is probably impossible to establish a concise Arakelov theory. To cut a long story short, a noncommutative arithmetic curve is the spectrum of a $\mathbb Z$-order in a finite dimensional semisimple $\mathbb Q$-algebra.

We now present our main results and methods. Let $\mathcal O$ be a $\mathbb Z$-order in a finite dimensional semisimple $\mathbb Q$-algebra $A.$ To any $\mathcal O$-module $M$ of finite length and any prime ideal $\frak p$ of $\mathcal O$ we associate a number $\ord_\frak p(M),$ which is a natural generalization of the valuation function $v_\frak p$ known for prime ideals in a Dedekind domain. Actually, $\ord_\frak p$ defines a group homomorphism from the unit group of $A$ onto the integers. This enables us to define the (first) Chow group $CH(\mathcal O)$ of $\mathcal O.$ As well as in the commutative case we can define ideal classes of $\mathcal O,$ but unlike the commutative case the set of all ideal classes does not admit a natural group structure.

It is crucial in Arakelov theory not only to deal with the finite places, i.e. the prime ideals, but also to take the infinite places into account. Since every semisimple algebra $A$ which is not a division algebra has nontrivial zero divisors, it does not admit a valuation in the usual sense, and hence there is no well-defined notion of infinite places of $A.$ This is the reason why we have to find a substitute to describe the infinite part. Recall that the Minkowski space $K_\mathbb R$ of an algebraic number field $K$ is isomorphic to $K\otimes_\mathbb Q\mathbb R$ and that the logarithm induces a homomorphism $\log:(K_\mathbb R)^\times\map\bigoplus_{v\mid\infty}\mathbb Rv,$ where the sum is taken over all infinite places $v$ of $K.$ Hence a possible candidate to describe the infinite part is the unit group $(A_\mathbb R)^\times$ of the real algebra $A_\mathbb R=\mathbb R\otimes_\mathbb QA.$ It turns out that this is a good choice and it allows us to define arithmetic divisors and complete ideal classes of $\mathcal O.$

The main results in the second section are Corollary \ref{productformula} and an arithmetic Riemann-Roch formula for noncommutative arithmetic curves. Corollary \ref{productformula} makes a statement about the elements of the unit group $A^\times,$ which generalizes the product formula satisfied by the nonzero elements of a number field. In Section 3 we introduce arithmetic vector bundles on a noncommutative arithmetic curve $\Spec\mathcal O$ and define the arithmetic Grothendieck group $\wh{K}_0(\mathcal O)$ of $\mathcal O.$ We show that there exists a uniquely determined homomorphism $\wh{\deg}_\mathcal O:\wh{K}_0(\mathcal O)\map\mathbb R,$ called the arithmetic degree, which behaves exactly like the corresponding map for hermitian vector bundles on commutative arithmetic curves. We use the arithmetic degree in Section 4 to define the height $H_\mathcal O(V)$ of a free $A$-submodule $V$ of $A^n.$ If $A$ is a division algebra, then our height coincides (up to normalization) with the height introduced by Liebend\"orfer and R\'emond [5]. It is an important property of height functions that they satisfy duality, i.e. that the height of a subspace equals the height of its orthogonal complement. We establish duality for the height $H_\mathcal O$ under the assumption that $\mathcal O$ is a maximal order in $A$ and that the twisted trace form assumes integral values on $\mathcal O\times\mathcal O.$ Liebend\"orfer and R\'emond [5] have proven duality for their height without the assumption that the twisted trace form assumes integral values on $\mathcal O\times\mathcal O.$ Their proof is rather different than the one in the present article.

\section{Definitions and notations}
In this section, we fix the notation which we will use throughout. The unit group of a ring $B$ with 1 is denoted by $B^\times.$ A \emph{prime ideal} of a ring $B$ is a proper non-zero two-sided ideal $\frak{p}$ in $B$ such that $aBb\not\subset\frak p$ for all $a,b\in B\setminus\frak p.$ The set of all prime ideals of $B$ is denoted by $\Spec B$ and called the (prime) spectrum of $B.$ Given an integral domain $R$ with quotient field $K,$ an $R$\emph{-lattice} is a finitely generated torsionfree $R$-module $L.$ We call $L$ a \emph{full} $R$-lattice in a finite dimensional $K$-vector space $V,$ if $L$ is a finitely generated $R$-submodule in $V$ such that $KL=V,$ where $KL$ is the $K$-subspace generated by $L.$ An $R$\emph{-order} in a finite dimensional $K$-algebra $A$ is a subring $\mathcal O$ of $A$ such that $\mathcal O$ is a full $R$-lattice in $A.$ By a left (right) \emph{$\mathcal O$-lattice} we mean
 a left (right) $\mathcal O$-module which is an $R$-lattice. Specifically, a \emph{full left $\mathcal O$-ideal} in $A$ is a full left $\mathcal O$-lattice in $A.$

From now on we let $K$ denote an algebraic number field and $R$ its ring of integers. If $\mathcal O$ is an $R$-order in a finite dimensional semisimple $K$-algebra $A,$ then we call $\Spec\mathcal O$ a \emph{noncommutative arithmetic curve.} By [7, (22.3)], the prime ideals $\frak p$ of $\mathcal{O}$ coincide with the maximal two-sided ideals of $\mathcal{O}.$ Therefore $\mathcal O/\frak p$ is a simple ring, hence it is isomorphic to a matrix algebra $M_{\kappa_\frak p}(S)$ over a (skew)field $S.$ The natural number $\kappa_\frak p$ is uniquely determined by $\frak p$ and is called the \emph{capacity} of the prime ideal $\frak p.$

\section{A Riemann-Roch formula}

Let $\Spec\mathcal O$ be a noncommutative arithmetic curve. Analogously to the commutative case, we define the \emph{divisor group} of $\mathcal O$ to be the free abelian group
$$\Div(\mathcal O)=\bigoplus_{\frak p}\mathbb Z\frak p$$
over the set of all prime ideals $\frak p$ of $\mathcal O.$ The construction of principal divisors is a little bit more complicated as in the standard case. It is based on the following construction. By the Jordan-H\"older Theorem, every left $\mathcal O$-module $M$ of finite length has an $\mathcal O$-decomposition series
$$0=M_0\subset M_1\subset\dots\subset M_l=M,$$
where the composition factors $S_i=M_i/M_{i-1},$ $i=1,\dots,l,$ are simple left $\mathcal O$-modules and where $l=l_\mathcal O(M)$ is the length of $M.$ The set of composition factors $S_i$ is uniquely determined by $M.$ We claim that the annihilator
$$\frak p_i=\ann_\mathcal O(S_i)=\{x\in\mathcal O\mid xS_i=0\}$$
of $S_i$ is a prime ideal of $\mathcal O.$ Obviously it is a proper nonzero two-sided ideal of $\mathcal O,$ and for any two elements $x,y\in\mathcal O\setminus\frak p_i$ we have $xS_i\neq 0$ and $yS_i\neq 0,$ which implies $\mathcal OyS_i=S_i$ because $S_i$ is a simple left $\mathcal O$-module. Hence $x\mathcal O yS_i=xS_i\neq 0,$ therefore $x\mathcal Oy\not\subset\frak p_i,$ which shows that $\frak p_i$ is indeed a prime ideal of $\mathcal O.$

To every left $\mathcal O$-module $M$ of finite length and every prime ideal $\frak p$ of $\mathcal O,$ we may thus associate a natural number $\ord_\frak p(M),$ which is defined to be the number of composition factors $S$ in the Jordan-H\"older decomposition series of $M$ for which $\ann_\mathcal O(S)=\frak p.$ Note that $\ord_\frak p$ generalize the valuation functions associated to prime ideals of Dedekind domains. Similar functions are also introduced by Neukirch [6, I.12] for orders in number fields.

Suppose we have a short exact sequence
$$0\rightarrow M'\rightarrow M\rightarrow M''\rightarrow 0$$
of left $\mathcal O$-modules of finite length. Then the composition factors of $M$ are those of $M'$ together with those of $M'',$ therefore $\ord_\frak p$ behaves additively on short exact sequences.

We apply this fact to the following situation. Recall that $\mathcal O$ is an $R$-order in a finite dimensional semisimple $K$-algebra $A.$ Let $\frak a\subset\mathcal O$ be an ideal of $\mathcal O$ such that $K\frak a=A,$ and let $x\in\mathcal O$ be no zero divisor. Then $\mathcal Ox$ is a full $R$-lattice in $A$ and we have the following exact sequence of left $\mathcal O$-modules of finite length
$$0\rightarrow\mathcal Ox/\frak ax \rightarrow \mathcal O/\frak ax \rightarrow \mathcal O/\mathcal Ox \rightarrow 0.$$
Since $x$ is not a zero divisor, it follows $\mathcal Ox/\frak ax\cong\mathcal O/\frak a,$ thus
\begin{equation}\label{10}
\ord_\frak p(\mathcal O/\frak ax)=\ord_\frak p(\mathcal O/\mathcal Ox)+\ord_\frak p(\mathcal O/\frak a).
\end{equation}
In particular, $\ord_\frak p$ can be extended to a group homomorphism
$$\ord_\frak p: A^\times\rightarrow\mathbb Z,\; u\mapsto\ord_\frak p(u)=\ord_\frak p(\mathcal O/\mathcal O x)-\ord_\frak p(\mathcal O/\mathcal O r),$$
where $r\in R$ and $x\in\mathcal O$ are such that $ru=x.$ To see that this map is well-defined, we let $x,y\in\mathcal O,$ $r,s\in R$ such that $u=\frac{x}{r}=\frac{y}{s}.$ Since $u$ is a unit, it follows that $x$ and $y$ are no zero divisors. The same is true for $r,s\in R^*\subset A^\times.$ Hence both, $\mathcal O/\mathcal Ox$ and $\mathcal O/\mathcal Or$ are $\mathcal O$-module of finite length and applying (\ref{10}) yields
$$\ord_\frak p(\mathcal O/\mathcal Oxs)=\ord_\frak p(\mathcal O/\mathcal Ox)+\ord_\frak p(\mathcal O/\mathcal Os)$$
and
$$\ord_\frak p(\mathcal O/\mathcal Oyr)=\ord_\frak p(\mathcal O/\mathcal Oy)+\ord_\frak p(\mathcal O/\mathcal Or).$$
But $xs=yr,$ thus
$$\ord_\frak p(\mathcal O/\mathcal Ox)+\ord_\frak p(\mathcal O/\mathcal Os)=\ord_\frak p(\mathcal O/\mathcal Oy)+\ord_\frak p(\mathcal O/\mathcal Or),$$
which shows that $\ord_\frak p(u)$ does not depend on the choice of $x$ and $r$ with $ru=x.$

Likewise one sees that $\ord_\frak p:A^\times\map\mathbb Z$ is additive. The homomorphisms $\ord_\frak p$ provide us with a further homomorphism
$$\di:A^{\times}\rightarrow\Div(\mathcal O),\;u\mapsto\di(u)=\left(\ord_{\frak p}(u)\frak p\right)_\frak p.$$
The elements $\di(u)$ are called \emph{principal divisors} and they form a subgroup of $\Div(\mathcal O)$ which we denote by $\mathcal P(\mathcal O).$ The factor group
$$\CH(\mathcal O)=\Div(\mathcal O)/\mathcal P(\mathcal O)$$
is called the (first) \emph{Chow group} of $\mathcal O.$

Next, we partition the set $J(\mathcal O)$ of all full left $\mathcal O$-ideals in $A$ into ideal classes by placing two ideals $\frak a, \frak b$ in the same class if $\frak a\cong\frak b$ as left $\mathcal O$-modules. Each such isomorphism extends to a left $A$-isomorphism $A=K\frak a\cong K\frak b=A,$ hence is given by right multiplication by some $u\in A^{\times}.$ Thus the ideal class containing $\frak a$ consists of all ideals $\{\frak au\mid u\in A^{\times}\}.$ We denote the set of ideal classes of $\mathcal O$ by $Cl(\mathcal O).$ Given any two full left $\mathcal O$-ideals $\frak a,$ $\frak b$ in $A,$ their product $\frak a\frak b$ is also a full left $\mathcal O$-ideal in $A.$ However when $A$ is noncommutative, the ideal class of $\frak a\frak b$ is not necessarily determined by the ideal class of $\frak a$ and $\frak b.$ Specifically for each $u\in A^\times,$ the ideal $\frak au$ is in the
 same class as $\frak a,$ but possibly $\frak au\frak b$ and $\frak a\frak b$ are in different classes. Hence, if $A$ is noncommutative, the multiplication of $\mathcal O$-ideals does not induce a multiplication on $Cl(\mathcal O).$

However there is still a connection between the set $Cl(\mathcal O)$ and the Chow group of $\mathcal O.$ Namely, since $\mathcal O$ is an $R$-lattice in $A,$ for every full left $\mathcal O$-ideal $\frak a$ in $A$ there is a nonzero $r\in R$ such that $\frak ar$ is a left ideal in $\mathcal O.$ As above, (\ref{10}) implies that the map
$$\ord_\frak p:J(\mathcal O)\rightarrow\mathbb Z,\quad\ord_\frak p(\frak a)=\ord_\frak p(\mathcal O/\frak ar)-\ord_\frak p(\mathcal O/\mathcal Or)$$
does not depend on the choice of the element $r\in R$ with $\frak ar\subset\mathcal O$ and thus is well-defined. This yields a mapping
$$\di:J(\mathcal O)\rightarrow\Div(\mathcal O),\;\frak a\mapsto\di(\frak a)=\left(-\ord_\frak p(\frak a)\frak p\right)_\frak p.$$
For all $\frak a\in J(\mathcal O)$ and all $u\in A^{\times},$ we have $\di(\frak a u)=\di(\frak a)-\di(u),$ so the map $\di$ actually lives on ideal classes and we may write
$$\di:Cl(\mathcal O)\rightarrow\CH(\mathcal O).$$

Note that even when $\mathcal O$ is an order in a number field, $Cl(\mathcal O)$ is a group only if $\mathcal O$ is the maximal order. If $\mathcal O$ is not the maximal order then one has to restrict to invertible $\mathcal O$-ideals, cf. [6, I.12].

Until now we have only dealt with the prime ideals of $\mathcal O,$ but it is crucial in Arakelov theory to take also the infinite places into account. Since every semisimple algebra $A$ which is not a division algebra has nontrivial zero divisors, it does not admit a valuation in the usual sense, and hence there is no well-defined notion of infinite places of $A.$ For this reason we have to find a substitute to describe the infinite part. Recall that the Minkowski space $K_\mathbb R$ of an algebraic number field $K$ is isomorphic to the real vector space $K\otimes_\mathbb Q\mathbb R$ and that the logarithm induces a homomorphism $\log:(K_\mathbb R)^\times\map\bigoplus_{v\mid\infty}\mathbb Rv,$ where the sum is taken over all infinite places $v$ of $K.$ Hence a possible candidate to describe the infinite part is the unit group $(A_\mathbb R)^\times$ of the associated real algebra $A_\mathbb R=\mathbb R\otimes_\mathbb QA.$

In this vein we define the \emph{arithmetic divisor group} of $\mathcal O$ to be
$$\wDiv(\mathcal O)=\Div(\mathcal O)\times\left(A_\mathbb R\right)^{\times},$$
where the group operation is defined component-wise which makes $\wDiv(\mathcal O)$ into a non-abelian group. We write $\ol D=\left(D,D_\infty\right)=\big(\sum_\frak pv_\frak p\frak p,D_\infty\big)$ for the elements of $\wDiv(\mathcal O).$ Clearly the homomorphism $\di:A^\times\rightarrow\Div(\mathcal O)$ extends to a homomorphism
$$\wdiv:A^{\times}\rightarrow\wDiv(\mathcal O),\;u\mapsto\wdiv(u)=\left(\di(u),(1\otimes u)\right).$$
The elements $\wdiv(u)$ are called \emph{arithmetic principal divisors} and they form a subgroup of $\wDiv(\mathcal O),$ which we denote by $\widehat{\mathcal P}(\mathcal O).$ The right cosets
$$\widehat{\mathcal P}(\mathcal O)\ol D,\quad\ol D\in\wDiv(\mathcal O)$$
are called \emph{arithmetic divisor classes} of $\mathcal O,$ and the set of all these classes is denoted by $\widehat{CH}(\mathcal O).$ In other words, $\widehat{CH}(\mathcal O)=\widehat P(\mathcal O)\backslash\widehat{Div}(\mathcal O).$ In general, $\widehat{P}(\mathcal O)$ is not a normal subgroup of $\widehat{Div}(\mathcal O),$ so the set $\widehat{CH}(\mathcal O)$ is usually not a group. Nevertheless, in analogy with the commutative case, there is an exact sequence
$$0\Map(A_\mathbb R)^\times/\mathcal O^\times\overset{a}{\Map}\wh{CH}(\mathcal O)\overset{\pi}{\Map}CH(\mathcal O)\Map 0,$$
where $a$ is given by $[x]\mapsto[(0,-x)]$ and $\pi$ maps $[(D,D_\infty)]$ to $[D].$

By a complete $\mathcal O$-ideal, we understand an element of the set
$$\widehat{J}(\mathcal O)=J(\mathcal O)\times(A_\mathbb R)^\times.$$
We write $\ol{\frak a}=\left(\frak a,\frak a_\infty\right)$ for the elements of $\widehat{J}(\mathcal O).$ There is a right-action of the unit group $A^\times$ on the set $\widehat{J}(\mathcal O)$ given by
$$(\ol{\frak a},u)\mapsto\ol{\frak a}u=\left(\frak au,(1\otimes u^{-1})\frak a_\infty\right).$$
We denote the set of orbits by $\widehat{Cl}(\mathcal O)$ and call it the set of \emph{complete ideal classes} of $\mathcal O.$

Again we have a mapping
$$\wdiv:\widehat{J}(\mathcal O)\rightarrow\wDiv(\mathcal O),\;\ol{\frak a}=(\frak a,\frak a_\infty)\mapsto\wdiv(\ol{\frak a})=\left(\di(\frak a),\frak a_\infty\right).$$
Since $\di:A^\times\rightarrow\Div(\mathcal O)$ is a group homomorphism, we have $\di(u^{-1})=-\di(u),$ therefore $\wdiv(\ol{\frak a}u)=\wdiv(u^{-1})\cdot\wdiv(\ol{\frak a}),$ hence we get a well-defined map
$$\wdiv:\widehat{Cl}(\mathcal O)\rightarrow\widehat{CH}(\mathcal O).$$

The \emph{absolute norm} of a complete $\mathcal O$-ideal $\ol{\frak a}=(\frak a,\frak a_\infty)$ is defined to be the real number
$$\frak N(\ol{\frak a})=\left|N_{A_\mathbb R\mid\mathbb R}(\frak a_\infty)\right|\prod_\frak p\frak N(\frak p)^{\ord_\frak p(\frak a)/\kappa_\frak p},$$
where $\frak N(\frak p)=\sharp(\mathcal O/\frak p)$ is the absolute norm and $\kappa_\frak p$ the capacity of the prime ideal $\frak p$ of $\mathcal O,$ and where $N_{A_\mathbb R\mid\mathbb R}$ denotes the norm map from $A_\mathbb R$ to $\mathbb R.$ For a full left $\mathcal O$-ideal $\frak a$ in $A,$ we set $\frak N(\frak a)=\frak N((\frak a,1)).$ A way to compute this number is given in

\begin{theorem}\label{pf}
If $\frak a$ is a full left $\mathcal O$-ideal in $A,$ then
$$\frak N(\frak a)\overset{(i)}{=}\sharp(\mathcal O/\frak ar)\sharp(\mathcal O/\mathcal Or)^{-1}\overset{(ii)}{=}\sharp(\mathcal O/\frak ar)\left|N_{K\mid\mathbb Q}(r)\right|^{-1},$$
where $r\in R$ is any nonzero element such that $\frak ar\subset\mathcal O.$
\end{theorem}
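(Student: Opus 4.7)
The plan is to establish a general identity: for every left $\mathcal O$-module $M$ of finite length,
$$\sharp M = \prod_\frak p \frak N(\frak p)^{\ord_\frak p(M)/\kappa_\frak p}.$$
Both sides are multiplicative along short exact sequences (the right-hand side by additivity of $\ord_\frak p$, already recorded in the text; the left-hand side obviously), so by the Jordan--H\"older theorem it suffices to verify the identity on a simple module $S$ with $\ann_\mathcal O(S) = \frak p$. Such an $S$ is a simple left module over the quotient $\mathcal O/\frak p \cong M_{\kappa_\frak p}(D)$ for some skew field $D$, hence $S \cong D^{\kappa_\frak p}$. Then $\sharp S = (\sharp D)^{\kappa_\frak p}$ while $\frak N(\frak p) = \sharp(\mathcal O/\frak p) = (\sharp D)^{\kappa_\frak p^2}$, so $\sharp S = \frak N(\frak p)^{1/\kappa_\frak p}$, as required.

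Applying this identity to $M = \mathcal O/\frak ar$ and $M = \mathcal O/\mathcal Or$ (both of finite length, since $\frak ar\subset\mathcal O$ is a full $\mathcal O$-ideal) and combining with the definition $\ord_\frak p(\frak a) = \ord_\frak p(\mathcal O/\frak ar) - \ord_\frak p(\mathcal O/\mathcal Or)$ from the preceding paragraphs yields
$$\frak N(\frak a) \;=\; \prod_\frak p \frak N(\frak p)^{\ord_\frak p(\frak a)/\kappa_\frak p} \;=\; \sharp(\mathcal O/\frak ar)\sharp(\mathcal O/\mathcal Or)^{-1},$$
which is equality (i). Independence of the choice of $r$ is automatic, since both sides visibly transform the same way upon replacing $r$ by $rs$ ($s\in R$) thanks to the short exact sequence $0\to\mathcal Os/\mathcal Ors\to\mathcal O/\mathcal Ors\to\mathcal O/\mathcal Or\to 0$.

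For equality (ii), it remains to identify $\sharp(\mathcal O/\mathcal Or)$ with $|N_{K\mid\mathbb Q}(r)|$. Since $r\in R$ lies in the center of $A$, we have $\mathcal Or = r\mathcal O$, and the quotient is the cokernel of multiplication by $r$ on $\mathcal O$ viewed as an $R$-module. Localizing at the primes of $R$ and invoking the invariant factor theorem for finitely generated modules over the Dedekind domain $R$, the computation reduces to the classical identity $\sharp(R/rR) = |N_{K\mid\mathbb Q}(r)|$ on each factor.

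The principal obstacle is the auxiliary identity of the first step, for once the cardinality of a simple $\mathcal O$-module is expressed in terms of $\frak N(\frak p)$ and $\kappa_\frak p$, all that follows is bookkeeping. The essential input is the Artin--Wedderburn description $\mathcal O/\frak p \cong M_{\kappa_\frak p}(D)$, which simultaneously furnishes the exponent $\kappa_\frak p$ and pins down the unique simple $\mathcal O/\frak p$-module up to isomorphism.
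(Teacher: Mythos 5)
Your argument for equality (i) is correct and is essentially the paper's own proof: the identity $\sharp M=\prod_{\frak p}\frak N(\frak p)^{\ord_\frak p(M)/\kappa_\frak p}$ for finite-length modules, verified on simple modules via $\mathcal O/\frak p\cong M_{\kappa_\frak p}(D)$ and $S\cong D^{\kappa_\frak p}$, is exactly the computation $\mathcal O/\frak p_i\cong(\mathcal O/\frak m_i)^{\kappa_i}$ carried out in the paper, and the passage to $\sharp(\mathcal O/\frak ar)\sharp(\mathcal O/\mathcal Or)^{-1}$ is the same bookkeeping.

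Equality (ii) is where your write-up has a genuine gap. Localizing at a prime $\frak q$ of $R$ makes $\mathcal O_\frak q$ a free $R_\frak q$-module of rank $m=\dim_KA$, so multiplication by the central element $r$ acts diagonally on $m$ copies of $R_\frak q$, and the invariant-factor computation gives
$$\sharp(\mathcal O/\mathcal Or)=\prod_{\frak q}\sharp\bigl(R_\frak q/rR_\frak q\bigr)^{m}=\sharp(R/rR)^{m}=\bigl|N_{K\mid\mathbb Q}(r)\bigr|^{[A:K]},$$
not $|N_{K\mid\mathbb Q}(r)|$: the phrase ``on each factor'' silently discards the exponent $m$. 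Note that this is not merely a repairable slip in your reduction. Your computation, carried out honestly, agrees with the paper's own identification $\sharp(\mathcal O/\mathcal Or)=|\det_\mathbb Z(\rho'_r)|=|N_{A\mid\mathbb Q}(r)|$ via the regular norm, because $N_{A\mid\mathbb Q}(r)=N_{K\mid\mathbb Q}(r)^{[A:K]}$ for central $r\in K$; the paper's proof of (ii) instead passes through the assertion $N_{K\mid\mathbb Q}(x)=N_{A\mid\mathbb Q}(x)$ for $x\in K$, which differs by exactly this power $[A:K]$ whenever $A\neq K$ (for instance $\mathcal O=M_2(\mathbb Z)$, $\frak a=\mathcal O$, $r=2$ gives $\sharp(\mathcal O/2\mathcal O)=16$ while $|N_{\mathbb Q\mid\mathbb Q}(2)|=2$). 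So you should either prove (ii) with $N_{A\mid\mathbb Q}$ in place of $N_{K\mid\mathbb Q}$ --- which your localization argument does once the exponent is retained --- or observe explicitly that (ii) as printed forces $[A:K]=1$.
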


\begin{proof}
Since $\mathcal O$ is a full $R$-lattice in $A,$ there exists a nonzero $r\in R$ such that $\frak ar\subset\mathcal O.$ By definition, $\ord_\frak p(\frak a)=\ord_\frak p(\frak ar)-\ord_\frak p(r)$ which implies
\begin{equation*}
\begin{split}
\frak N(\frak a)&=\prod_\frak p\frak N(\frak p)^{\ord_\frak p(\frak a)/\kappa_\frak p}\\
&=\prod_\frak p\frak N(\frak p)^{(\ord_\frak p(\frak ar)-\ord_\frak p(r))/\kappa_p}\\
&=\prod_\frak p\frak N(\frak p)^{\ord_\frak p(\frak ar)/\kappa_\frak p}\left(\prod_\frak p\frak N(\frak p)^{\ord_\frak p(r)/\kappa_p}\right)^{-1}.
\end{split}
\end{equation*}
Hence, if the formula $\frak N(\frak b)=\sharp(\mathcal O/\frak b)$ is proven for any ideal $\frak b$ in $\mathcal O,$ then equation (i) is established.

So let us assume that $\frak b$ is an ideal in $\mathcal O,$ and let $S_i$ be the $\mathcal O$-composition factors in the Jordan-H\"older decomposition series of $\mathcal O/\frak b.$ As simple left $\mathcal O$-modules they are of the form $S_i\cong\mathcal O/\frak m_i$ with $\frak m_i$ a maximal left ideal of $\mathcal O.$ If we let $\kappa_i$ denote the capacity of the prime ideal $\frak p_i=\ann_\mathcal O(S_i),$ then $\mathcal O/\frak m_i$ is a minimal left ideal of $\mathcal O/\frak p_i,$ and the simple ring $\mathcal O/\frak p_i$ is isomorphic to the ring of $\kappa_i\times\kappa_i$-matrices over the (skew)field $\End_{\mathcal O/\frak p_i}(\mathcal O/\frak m_i).$ Therefore $\mathcal O/\frak p_i\cong(\mathcal O/\frak m_i)^{\kappa_i},$ whence
$$\frak N(\frak b)=\prod_\frak p\frak N(\frak p)^{\ord_\frak p(\frak b)/\kappa_\frak p}=\prod_i\sharp(\mathcal O/\frak m_i)=\sharp(\mathcal O/\frak b).$$

To prove equation (ii), it remains to show  $|N_{K\mid\mathbb Q}(r)|=\sharp(\mathcal O/\mathcal Or).$ Firstly,  $N_{A\mid\mathbb Q}=N_{K\mid\mathbb Q}\circ N_{A\mid K},$ thus $N_{K\mid\mathbb Q}(x)=N_{A\mid\mathbb Q}(x)$ for every $x\in K.$ Secondly, $N_{A\mid\mathbb Q}(x)=\det_\mathbb Q(\rho_x),$ where $\rho_x:A\rightarrow A$ is right multiplication by $x.$ Since $\mathcal O$ is a full $\mathbb Z$-lattice in $A$ and $r\in\mathcal O,$ we have $\det_\mathbb Q(\rho_r)=\det_\mathbb Z(\rho'_r),$ where $\rho'_r:\mathcal O\rightarrow\mathcal O$ is the restriction of $\rho_r$ to $\mathcal O.$ But it is a basic fact in the theory of finitely generated $\mathbb Z$-modules that $\det_\mathbb Z(\rho'_r)=\sharp(\mathcal O/\mathcal Or).$
\end{proof}

As a corollary, we obtain a noncommutative analogue to the well-known product formula $\prod_v |x|_v =1,$ which holds for any nonzero element $x$ in a number field. In our context the product formula reads:

\begin{corollary}\label{productformula}
Every $u\in A^\times$ satisfies
$$\prod_{\frak p}\frak N(\frak p)^{\ord_{\frak p}(u)/\kappa_\frak p}=|N_{A\mid\mathbb Q}(u)|.$$
\end{corollary}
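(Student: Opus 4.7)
The strategy is to apply Theorem \ref{pf} to the principal left $\mathcal O$-ideal generated by $u$ and then translate the resulting equation into the product formula. Given $u \in A^\times$, choose $r \in R$ nonzero and $x \in \mathcal O$ with $ru = x$, exactly as in the definition of $\ord_\frak p(u)$. Then $\mathcal O u$ is a full left $\mathcal O$-ideal in $A$ (since $u$ is a unit, $K\cdot\mathcal O u = A$), and $(\mathcal O u)r = \mathcal O x \subset \mathcal O$, so Theorem \ref{pf} applies with the ideal $\frak a = \mathcal O u$ and this particular $r$.

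The plan is to compute $\frak N(\mathcal O u)$ in two ways. On one hand, directly from the definition of the absolute norm together with the identity $\ord_\frak p(\mathcal O u) = \ord_\frak p(\mathcal O/\mathcal O x) - \ord_\frak p(\mathcal O/\mathcal O r) = \ord_\frak p(u)$, which is immediate from the definitions of the two maps called $\ord_\frak p$. This gives
$$\frak N(\mathcal O u) = \prod_\frak p \frak N(\frak p)^{\ord_\frak p(u)/\kappa_\frak p},$$
which is the left-hand side of the formula we want. On the other hand, Theorem \ref{pf}(ii) yields
$$\frak N(\mathcal O u) = \sharp(\mathcal O/\mathcal O x)\,|N_{K\mid\mathbb Q}(r)|^{-1}.$$

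It remains to identify this second expression with $|N_{A\mid\mathbb Q}(u)|$. For this I would reuse the determinant argument from the proof of Theorem \ref{pf}: since $x$ is not a zero divisor, right multiplication $\rho_x: A \to A$ restricts to an injective $\mathbb Z$-linear map $\rho'_x: \mathcal O \to \mathcal O$, and the structure theorem for finitely generated $\mathbb Z$-modules gives $\sharp(\mathcal O/\mathcal O x) = |\det_\mathbb Z(\rho'_x)| = |\det_\mathbb Q(\rho_x)| = |N_{A\mid\mathbb Q}(x)|$. Moreover $N_{K\mid\mathbb Q}(r) = N_{A\mid\mathbb Q}(r)$ via the transitivity $N_{A\mid\mathbb Q} = N_{K\mid\mathbb Q}\circ N_{A\mid K}$ applied to $r \in K$. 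Combining these with the multiplicativity of $N_{A\mid\mathbb Q}$ and $ru = x$ yields
$$\frak N(\mathcal O u) = \frac{|N_{A\mid\mathbb Q}(x)|}{|N_{A\mid\mathbb Q}(r)|} = |N_{A\mid\mathbb Q}(u)|,$$
completing the proof.

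I do not anticipate a genuine obstacle here: the whole point is that Theorem \ref{pf} has done the real work, and the corollary is essentially a bookkeeping step. The only subtlety worth mentioning is checking that the bridge between the two uses of the symbol $\ord_\frak p$ (on units and on ideals) is consistent for the ideal $\mathcal O u$, but this follows directly from unwinding the definitions and observing that $\mathcal O u \cdot r = \mathcal O x$.
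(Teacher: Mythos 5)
Your proposal is correct and follows essentially the same route as the paper's own proof: apply Theorem \ref{pf} to the full left $\mathcal O$-ideal $\mathcal Ou$, use $\ord_\frak p(\mathcal Ou)=\ord_\frak p(u)$ to identify $\frak N(\mathcal Ou)$ with the left-hand side, and reuse the determinant argument from the end of the proof of Theorem \ref{pf} to get $\sharp(\mathcal O/\mathcal Ox)=|N_{A\mid\mathbb Q}(x)|$ before concluding by multiplicativity of the norm. No gaps.
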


\begin{proof}
For $u\in A^\times,$ we may write $u=r^{-1}x$ with $r\in R$ and $x\in\mathcal O.$ By definition, $\ord_\frak p(u)=\ord_\frak p(\mathcal Ou),$ hence applying Theorem \ref{pf} yields
$$\prod_{\frak p}\frak N(\frak p)^{\ord_{\frak p}(u)/\kappa_\frak p}=\prod_\frak p\frak N(\frak p)^{\ord_\frak p(\mathcal Ou)/\kappa_\frak p}=\sharp(\mathcal O/\mathcal Our)|N_{K\mid\mathbb Q}(r)|^{-1}.$$
On the other hand we have seen in the last paragraph of the proof of Theorem \ref{pf} that $\sharp(\mathcal O/\mathcal Our)=|N_{A\mid\mathbb Q}(ur)|.$ Since the norm is multiplicative and $N_{K\mid\mathbb Q}(r)=N_{A\mid\mathbb Q}(r),$ the corollary follows.
\end{proof}

In the light of the previous results, we see that the absolute norm defines a map
\begin{equation}\label{norm}
\frak N:\widehat{Cl}(\mathcal O)\rightarrow\mathbb R_+^*.
\end{equation}
Indeed, if $\ol{\frak a}$ and $\ol{\frak a}u$ are two representatives of the same complete ideal class, then
\begin{equation*}
\begin{split}
\frak N(\ol{\frak a}u)&=\left|N_{A_\mathbb R\mid\mathbb R}\left((1\otimes u^{-1})\frak a_\infty\right)\right|\frak N(\frak au)\\
&=\left|N_{A_\mathbb R\mid\mathbb R}\left(1\otimes u^{-1}\right)\right|\left|N_{A_\mathbb R\mid\mathbb R}(\frak a_\infty)\right|\left|N_{A\mid\mathbb Q}(u)\right|\frak N(\frak a).
\end{split}
\end{equation*}
Since $N_{A_\mathbb R\mid\mathbb R}(1\otimes u^{-1})=N_{A\mid\mathbb Q}(u^{-1})=N_{A\mid\mathbb Q}(u)^{-1},$ cf. [7, Exercise 1.2], it follows $\frak N(\ol{\frak a}u)=\frak N(\ol{\frak a}),$ which shows that the map in (\ref{norm}) is well-defined.

There is a homomorphism $\deg:\wDiv(\mathcal O)\rightarrow\mathbb R$ defined by
$$\left(\textstyle{\sum}_{\frak p}v_{\frak p}\frak p,D_\infty\right)\mapsto\deg\left(\textstyle{\sum}_{\frak p}v_{\frak p}\frak p,D_\infty\right)=\sum_{\frak p}\textstyle{\frac{v_\frak p}{\kappa_\frak p}}\log\frak N(\frak p)-\log\left|N_{A_\mathbb R\mid\mathbb R}(D_\infty)\right|.$$
The product formula, Corollary \ref{productformula}, ensures that every arithmetic principal divisor $\wdiv(u)\in\widehat{\mathcal P}(\mathcal O)$ satisfies
$$\deg(\wdiv(u))=\sum_{\frak p}\textstyle{\frac{\ord_\frak p(u)}{\kappa_\frak p}}\log\frak N(\frak p)-\log\left|N_{A\mid\mathbb Q}(u)\right|=0.$$
Therefore $\deg$ induces a map
$$\deg:\widehat{\CH}(\mathcal O)\rightarrow\mathbb R.$$

Putting all together, we finally obtain the commutative diagram
\begin{equation}\label{1}
\begin{CD}
\widehat{Cl}(\mathcal O)@>\frak N>>\mathbb{R}_{+}^{*}\\
@V\wdiv VV        @V-\log VV\\
\widehat{\CH}(\mathcal O)@>\deg>>\mathbb{R}.
\end{CD}
\end{equation}
This commutative diagram generalizes the one valid for orders in number fields, cf. [6, page 192].

If $A$ is a finite dimensional semisimple $K$-algebra, then $A_\mathbb R=\mathbb R\otimes_\mathbb Q A$ is a finite dimensional semisimple $\mathbb R$-algebra, cf. [7, (7.18)], hence it follows from [7, (9.26)] that the (reduced) trace map $\tr_{A_\mathbb R\mid\mathbb R}$ from $A_\mathbb R$ onto $\mathbb R$ gives rise to the symmetric nondegenerate $\mathbb R$-bilinear form
$$A_\mathbb R\times A_\mathbb R\rightarrow\mathbb R,\; (x,y)\mapsto\tr(xy).$$
We call this inner product the (reduced) trace form on $A_\mathbb R.$ It provides a Haar measure on the real vector space $A_\mathbb R,$ which we call the canonical measure on $A_\mathbb R.$ We write $\vol(X)$ for the volume of a subset $X\subset A_\mathbb R$ with respect to the canonical measure on $A_\mathbb R.$  For a full $\mathbb Z$-lattice $L$ in $A_\mathbb R,$ we put
$$\vol(L)=\vol(\Phi(L)),$$
where $\Phi(L)$ is a fundamental domain of $L.$

Consider now an $R$-order $\mathcal O$ in $A.$  Every full left $\mathcal O$-ideal $\frak a$ in $A$ is mapped by the embedding $j:A\map A_\mathbb R,$ $a\mapsto 1\otimes a,$ onto a full $\mathbb Z$-lattice $j(\frak a)$ in $A_\mathbb R.$ If $\ol{\frak a}=(\frak a,\frak a_\infty)$ is a complete $\mathcal O$-ideal then $\frak a_\infty\in\left(A_\mathbb R\right)^{\times},$ thus $\frak a_\infty\cdot j(\frak a)$ is a full $\mathbb Z$-lattice in $A_\mathbb R.$ We set
$$\vol(\ol{\frak a})=\vol\left(\frak a_\infty\cdot j(\frak a)\right)$$
and call the real number
$$\chi(\ol{\frak a})=-\log\vol(\ol{\frak a})$$
the \emph{Euler-Minkowski characteristic} of $\ol{\frak a}.$ Now we are ready to state a Riemann-Roch formula for noncommutative arithmetic curves:

\begin{theorem}
Let $\Spec\mathcal O$ be a noncommutative arithmetic curve. Then every complete $\mathcal O$-ideal $\ol{\frak a}$ satisfies the Riemann-Roch formula
$$\chi(\ol{\frak a})=\deg\big(\wdiv(\ol{\frak a})\big)+\chi(\ol{\mathcal O}),$$
where $\ol{\mathcal O}=(\mathcal O,1).$
\end{theorem}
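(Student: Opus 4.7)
The plan is to reformulate the Riemann-Roch formula as a volume comparison and then reduce it to Theorem~\ref{pf}(ii) via a simple rescaling argument. Unpacking the definitions, one computes
$$\deg\bigl(\wdiv(\ol{\frak a})\bigr)=\sum_\frak p\tfrac{-\ord_\frak p(\frak a)}{\kappa_\frak p}\log\frak N(\frak p)-\log\bigl|N_{A_\mathbb R\mid\mathbb R}(\frak a_\infty)\bigr|=-\log\frak N(\ol{\frak a}),$$
so after exponentiation the target formula becomes $\vol(\ol{\frak a})=\frak N(\ol{\frak a})\cdot\vol(\ol{\mathcal O})$, with $\vol(\ol{\mathcal O})=\vol(j(\mathcal O))$.

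Next I would record the change-of-variables principle: for any $g\in(A_\mathbb R)^\times$ and any full $\mathbb Z$-lattice $L$ in $A_\mathbb R$,
$$\vol(g\cdot L)=\bigl|N_{A_\mathbb R\mid\mathbb R}(g)\bigr|\,\vol(L),$$
because $N_{A_\mathbb R\mid\mathbb R}(g)$ is by the paper's convention the determinant of multiplication by $g$ on the $\mathbb R$-vector space $A_\mathbb R$. Applying this with $g=\frak a_\infty$ and $L=j(\frak a)$, the remaining task is to prove the purely finite statement
$$\frac{\vol(j(\frak a))}{\vol(j(\mathcal O))}=\frak N(\frak a).$$

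To establish this, I would choose a nonzero $r\in R$ with $\frak ar\subset\mathcal O$, which is possible since $\mathcal O$ is a full $R$-lattice in $A$. Then $j(\frak ar)=j(\frak a)\cdot(1\otimes r)$ is a sublattice of $j(\mathcal O)$ of finite index $\sharp(\mathcal O/\frak ar)$, so computing $\vol(j(\frak ar))$ in two different ways yields
$$\sharp(\mathcal O/\frak ar)\cdot\vol(j(\mathcal O))=\vol(j(\frak ar))=\bigl|N_{K\mid\mathbb Q}(r)\bigr|\cdot\vol(j(\frak a)),$$
where the second equality uses the change-of-variables principle applied to $1\otimes r$ together with the identity $N_{A_\mathbb R\mid\mathbb R}(1\otimes r)=N_{A\mid\mathbb Q}(r)=N_{K\mid\mathbb Q}(r)$ for $r\in K$. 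Rearranging and invoking Theorem~\ref{pf}(ii) finishes the argument.

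The main obstacle is really just the bookkeeping: correctly identifying $\deg(\wdiv(\ol{\frak a}))$ with $-\log\frak N(\ol{\frak a})$, and verifying that multiplication by an invertible element of $A_\mathbb R$ rescales the canonical volume by the absolute value of its regular norm. Once these two observations are in place, the Riemann-Roch formula reduces directly to the finite norm computation already established in Theorem~\ref{pf}.
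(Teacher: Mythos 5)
Your proof is correct and follows essentially the same route as the paper's: both reduce the statement to the identity $\vol(\ol{\frak a})=\frak N(\ol{\frak a})\vol(\ol{\mathcal O})$ via the observation $\deg(\wdiv(\ol{\frak a}))=-\log\frak N(\ol{\frak a})$ and the rescaling rule $\vol(g\cdot L)=|N_{A_\mathbb R\mid\mathbb R}(g)|\vol(L)$, and then invoke Theorem \ref{pf}. The only cosmetic difference is that the paper first uses the invariance of $\vol(\ol{\cdot})$ on complete ideal classes to reduce to the case $\frak a\subset\mathcal O$, whereas you treat a general $\frak a$ by rescaling with $r\in R$ directly, which amounts to the same index-versus-norm computation.
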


\begin{proof}
We already know that $\deg\circ\wdiv$ is constant on complete ideal classes, and we claim that the same holds for the Euler-Minkowski characteristic. Indeed, if $\ol{\frak a}u$ is another representative of the complete ideal class of $\ol{\frak a}=(\frak a,\frak a_\infty),$ then
$$\vol(\ol{\frak a}u)=\left|N_{A_\mathbb R\mid\mathbb R}\left((1\otimes u^{-1})\frak a_\infty\right)\right|\vol(\frak a u)=\left|N_{A_\mathbb R\mid\mathbb R}(\frak a_\infty)\right|\vol(\frak a)=\vol(\ol{\frak a}).$$
Therefore we can assume that $\frak a\subset\mathcal O.$ But then, $\vol(\frak a)=\vol(\mathcal O)\sharp(\mathcal O/\frak a).$ This together with Theorem \ref{pf} yields
\begin{equation}\label{rr}
\vol(\ol{\frak a})=\left|N_{A_\mathbb R\mid\mathbb R}(\frak a_\infty)\right|\vol(\frak a)=\left|N_{A_\mathbb R\mid\mathbb R}(\frak a_\infty)\right|\vol(\mathcal O)\frak N(\frak a)=\frak N(\ol{\frak a})\vol(\mathcal O).
\end{equation}
On the other hand the commutative diagram (\ref{1}) tells us that $\deg\big(\wdiv(\ol{\frak a})\big)=-\log\frak N(\ol{\frak a}),$ which in combination with (\ref{rr}) establishes our Riemann-Roch formula.
\end{proof}

The Riemann-Roch formula finishes our study of complete $\mathcal O$-ideals. In the next section we will see how complete $\mathcal O$-ideals are embedded in the more general theory of arithmetic vector bundles on noncommutative arithmetic curves.

\section{Arithmetic vector bundles}
Let $*$ be an involution on a finite dimensional semisimple real algebra $B.$ A \emph{$*$-hermitian metric} on a $B$-module $M$ is a $*$-hermitian form $h:M\times M\rightarrow B$ such that $\tr_{B\mid\mathbb R}\circ h$ is positive definite. Here $*$-hermitian means that $h$ is $B$-linear in the first argument and for all $x,y\in M,$ $h(x,y)=h(y,x)^*.$ Let $S$ be a simple algebra of finite dimension over $\mathbb R.$ An involution $*$ on $S$ is called \emph{positive} if the twisted trace form given by $(x,y)\mapsto\tr_{S\mid\mathbb R}(xy^*)$ is positive definite. With the help of Wedderburn's Structure Theorem, it is easy to see that every finite dimensional simple real algebra admits a positive involution. For more details we refer to [2, Section 5.5]. We call an involution on a semisimple real algebra $B$ positive, if its restriction to each simple component of $B$ is a positive involution. We have the following:

\begin{lemma}\label{l1}
Let $B$ be a finite dimensional semisimple real algebra, and let $*$ be a positive involution on $B.$ Then every $*$-hermitian metric $h$ on $B$ is of the form
$$h(x,y)=x\beta\beta^*y^*,\; x,y\in B,$$
for some $\beta\in B^\times.$
\end{lemma}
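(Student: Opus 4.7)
The plan has two parts: first, bring $h$ into the normal form $h(x,y) = x\alpha y^*$ with $\alpha = \alpha^* \in B$; second, factor $\alpha = \beta\beta^*$ by applying the spectral theorem to the operator of right-multiplication by $\alpha$ on the real Euclidean space $B$.

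For the normal form, left $B$-linearity gives $h(x,y) = x\cdot h(1,y)$, while hermitian symmetry gives $h(1,y) = h(y,1)^* = (y\cdot h(1,1))^* = h(1,1)^*\,y^*$. Setting $\alpha := h(1,1)^*$ yields $h(x,y) = x\alpha y^*$, and rewriting $h(x,y) = h(y,x)^*$ in this form forces $x\alpha y^* = x\alpha^* y^*$ for all $x,y$, hence $\alpha = \alpha^*$. It remains to factor $\alpha$.

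Introduce the reference form $\langle x, y \rangle := \tr_{B|\mathbb R}(xy^*)$ on $B$ regarded as a finite-dimensional real vector space; it is symmetric positive definite because $*$ restricts to a positive involution on each simple component of $B$. Using cyclicity of the reduced trace together with nondegeneracy of the trace form $(u,v)\mapsto\tr_{B|\mathbb R}(uv)$, one checks that for each $\gamma \in B$ the $\langle\cdot,\cdot\rangle$-adjoint of right-multiplication $R_\gamma\colon x\mapsto x\gamma$ is $R_{\gamma^*}$. Therefore $R_\alpha$ is self-adjoint, and the positive definiteness hypothesis on $h$ is exactly the statement $\langle R_\alpha x,x\rangle = \tr_{B|\mathbb R}(x\alpha x^*) > 0$ for $x \ne 0$, making $R_\alpha$ positive definite on $(B,\langle\cdot,\cdot\rangle)$.

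By the spectral theorem $R_\alpha$ admits a positive-definite self-adjoint square root $S$, and $S$ can be written as a polynomial in $R_\alpha$ by Lagrange interpolation at its eigenvalues. Since the map $\gamma \mapsto R_\gamma$ embeds $B^{\mathrm{op}}$ into $\End_\mathbb R(B)$ as a subalgebra containing $R_\alpha$, the operator $S$ equals $R_\beta$ for a (unique) $\beta \in B$. Self-adjointness of $S$ translates to $\beta = \beta^*$, and the identity $R_{\beta\beta^*} = R_{\beta^*}R_\beta = S^2 = R_\alpha$ gives $\alpha = \beta\beta^*$. Invertibility of $\beta$ follows because $S$ is an invertible operator, whose inverse again lies in the image of $R$ and thus has the form $R_{\beta^{-1}}$. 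The only delicate point is ensuring that the operator-theoretic square root lifts back to an element of $B$; this is secured by the polynomial expression for $S$ in $R_\alpha$, which keeps $S$ inside the finite-dimensional subalgebra of right-multiplications.
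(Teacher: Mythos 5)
Your proof is correct and follows the same outline as the paper's: reduce to the normal form $h(x,y)=x\,h(1,1)\,y^*$ with $h(1,1)$ $*$-symmetric, then factor $h(1,1)=\beta\beta^*$. The paper simply asserts this factorization from positive definiteness, whereas you supply a complete justification via the spectral theorem applied to the right-multiplication operator $R_{\alpha}$ inside the subalgebra of right multiplications, which is a valid and welcome filling-in of that step.
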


\begin{proof}
It is obvious that such an $h$ is $*$-hermitian. Since $\beta$ is a unit, it is not a zero divisor, thus $x\beta\neq 0$ whenever $x\neq 0.$ But $h(x,x)=x\beta(x\beta)^*$ and $*$ is positive, therefore $\tr\circ h$ is positive definite.

On the other hand, if $h$ is a $*$-hermitian metric on $B,$ then $h(x,y)=xh(1,1)y^*$ and $h(1,1)^*=h(1,1).$ The positive definiteness of $\tr\circ h$ ensures that $h(1,1)=\beta\beta^*$ for some $\beta\in B^\times.$
\end{proof}

Since every $B$-module is projective it follows directly from the lemma that every finitely generated $B$-module admits a $*$-hermitian metric.

Let us return to the study of noncommutative arithmetic curves $\Spec\mathcal O.$ Recall that $\mathcal O$ is an $R$-order in a finite dimensional semisimple $K$-algebra $A.$ The associated real algebra $A_\mathbb R=\mathbb R\otimes_\mathbb Q A$ is finite dimensional and semisimple as well, cf. [7, (7.18)]. From now on, we fix a positive involution $*$ on $A_\mathbb R$ and we simply write hermitian instead of $*$-hermitian.

\begin{definition}
An \emph{arithmetic vector bundle} on $\Spec\mathcal O$ is a pair $\ol E=(E,h),$ where $E$ is a left $\mathcal O$-lattice such that $A\otimes_\mathcal OE$ is a free $A$-module, and where $h:E_\mathbb R\times E_\mathbb R\map A_\mathbb R$ is a hermitian metric on the left $A_\mathbb R$-module $E_\mathbb R=A_\mathbb R\otimes_\mathcal OE.$

If $A\otimes_\mathcal OE\cong A$ then $\ol E$ is called an \emph{arithmetic line bundle} on $\Spec\mathcal O.$
\end{definition}

In the rest of the section, we generalize some parts of the third chapter of Neukirch's book [6] to our setup. We can mostly mimic the constructions and proofs done there. Two arithmetic vector bundles $\ol E=(E,h)$ and $\ol E'=(E',h')$ on $\Spec\mathcal O$ are called \emph{isomorphic} if there exists an isomorphism $\phi:E\rightarrow E'$ of left $\mathcal O$-modules which induces an isometry $\phi_\mathbb R:E_\mathbb R\rightarrow E'_\mathbb R,$ i.e. $h'\left(\phi_\mathbb R(x),\phi_\mathbb R(y)\right)=h(x,y)$ for all $x,y\in E_\mathbb R.$

By a short exact sequence
$$0\rightarrow\ol E'\rightarrow\ol E\rightarrow\ol E''\rightarrow 0$$
of arithmetic vector bundles we understand a short exact sequence
$$0\Map E'\overset{\alpha}{\Map}E\overset{\beta}{\Map}E''\Map 0$$
of the underlying left $\mathcal O$-modules which splits isometrically, that is, in the sequence
$$0\Map E'_\mathbb R\overset{\alpha_\mathbb R}{\Map}E_\mathbb R\overset{\beta_\mathbb R}{\Map}E''_\mathbb R\Map 0,$$
$E'_\mathbb R$ is mapped isometrically onto $\alpha_\mathbb R(E'_\mathbb R),$ and the orthogonal complement $\left(\alpha_\mathbb R(E'_\mathbb R)\right)^\perp$ is mapped isometrically onto $E''_\mathbb R.$

We let
$$\wh F_0(\mathcal O)=\bigoplus_{\{\ol E\}}\mathbb Z\{\ol E\}$$
be the free abelian group over the isomorphism classes $\{\ol E\}$ of arithmetic vector bundles on $\Spec\mathcal O.$ In this group we consider the subgroup $\wh R_0(\mathcal O)$ generated by all elements $\{\ol E'\}+\{\ol E''\}-\{\ol E\},$ which arise in a short exact sequence
$$0\rightarrow\ol E'\rightarrow\ol E\rightarrow\ol E''\rightarrow 0$$
of arithmetic vector bundles on $\Spec\mathcal O.$ The factor group
$$\wh K_0(\mathcal O)=\wh F_0(\mathcal O)/\wh R_0(\mathcal O)$$
is called the \emph{Grothendieck group} of arithmetic vector bundles on $\Spec\mathcal O.$ Note that the Grothendieck group $\wh K_0(\mathcal O)$ depends on the chosen involution $*$ on $A_\mathbb R,$ but since we have fixed the involution, we omit to indicate this dependence in the notation.

The connection to the Riemann-Roch theory of the last section is given as follows. Every complete $\mathcal O$-ideal $\ol{\frak a}=\left(\frak a,\frak a_\infty\right)$ gives rise to the particular hermitian metric
$$h_{\ol{\frak a}}(x,y)=x\frak a_\infty(\frak a_\infty)^*y^*,\; x,y\in A_\mathbb R,$$
on $\frak a_\mathbb R=A_\mathbb R\otimes_\mathcal O\frak a=A_\mathbb R.$ We thus obtain the arithmetic line bundle $(\frak a,h_{\ol{\frak a}})$ on $\Spec\mathcal O$ for which we use the notation $L(\ol{\frak a}).$

\begin{theorem}\label{groto}
Let $\Spec\mathcal O$ be a noncommutative arithmetic curve. Then there is a well-defined mapping
$$\widehat{Cl}(\mathcal O)\rightarrow\wh K_0(\mathcal O),\; [\ol{\frak a}]\mapsto [L(\ol{\frak a})].$$
Moreover the elements $[L(\ol{\frak a})]$ generate the Grothendieck group $\wh K_0(\mathcal O).$
\end{theorem}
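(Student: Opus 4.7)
The plan is to establish the two assertions separately: well-definedness of the map $[\ol{\frak a}] \mapsto [L(\ol{\frak a})]$, and generation of $\wh K_0(\mathcal O)$ by these classes.

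For well-definedness, I would show that if $\ol{\frak a}u = (\frak a u, (1\otimes u^{-1})\frak a_\infty)$ is another representative of the complete ideal class of $\ol{\frak a}$, then $L(\ol{\frak a}) \cong L(\ol{\frak a}u)$ as arithmetic line bundles. Take right multiplication $\phi: \frak a \to \frak a u$, $a \mapsto au$, which is a left $\mathcal O$-module isomorphism with real extension $\phi_\mathbb R(x) = x(1\otimes u)$. Using that $*$ is an anti-homomorphism, a direct calculation
$$h_{\ol{\frak a}u}(\phi_\mathbb R(x),\phi_\mathbb R(y)) = x(1\otimes u)(1\otimes u^{-1})\frak a_\infty \frak a_\infty^* (1\otimes u^{-1})^*(1\otimes u)^* y^* = h_{\ol{\frak a}}(x,y)$$
shows $\phi_\mathbb R$ is an isometry, so $L(\ol{\frak a})$ and $L(\ol{\frak a}u)$ determine the same class in $\wh K_0(\mathcal O)$.

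For the generation claim, I would argue by induction on the $A$-rank $n$ of $A\otimes_\mathcal O E$ for an arithmetic vector bundle $\ol E = (E,h)$. When $n=1$, any isomorphism $A\otimes_\mathcal O E \cong A$ realizes $E$ as a full left $\mathcal O$-ideal $\frak a$ in $A$, and Lemma \ref{l1} writes the metric as $h(x,y) = x\beta\beta^* y^*$ for some $\beta \in A_\mathbb R^\times$, giving $\ol E = L((\frak a,\beta))$. For $n>1$, I would fix a coordinate projection $A^n \to A$ and restrict it to $E$, yielding an exact sequence of left $\mathcal O$-lattices $0 \to E' \to E \to \frak b \to 0$, where $\frak b$ is a full left $\mathcal O$-ideal in $A$ and $A\otimes_\mathcal O E' \cong A^{n-1}$. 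To upgrade this to a short exact sequence of arithmetic vector bundles, I equip $E'_\mathbb R$ with the restricted metric and must exhibit an isometric splitting. The critical point is that the orthogonal complement $(E'_\mathbb R)^\perp$ (relative to $\tr_{A_\mathbb R|\mathbb R}\circ h$) is stable under $A_\mathbb R$. This follows from the identity $h(x,a^* y) = h(x,y)a$ (derived from $*$-hermitian symmetry and $(*)^2 = \id$) together with cyclicity of trace: for $x \in (E'_\mathbb R)^\perp$, $a \in A_\mathbb R$, $y \in E'_\mathbb R$, one has
$$\tr h(ax,y) = \tr(a\,h(x,y)) = \tr(h(x,y)a) = \tr h(x, a^* y) = 0,$$
since $a^* y \in E'_\mathbb R$. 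Hence $(E'_\mathbb R)^\perp$ maps $A_\mathbb R$-linearly and isometrically onto $\frak b_\mathbb R = A_\mathbb R$, so Lemma \ref{l1} identifies the transferred metric on $\frak b$ as $h_{\ol{\frak b}}$ for some complete ideal $\ol{\frak b} = (\frak b,\beta)$. The resulting relation $[\ol E] = [\ol{E'}] + [L(\ol{\frak b})]$ in $\wh K_0(\mathcal O)$, combined with the induction hypothesis applied to $\ol{E'}$, completes the argument.

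I expect the main obstacle to be verifying $A_\mathbb R$-stability of the orthogonal complement. Without it one cannot view the short exact sequence of underlying $\mathcal O$-lattices as a short exact sequence of arithmetic vector bundles, and the induction on rank breaks down; the remaining steps reduce to routine applications of Lemma \ref{l1} and the definition of $\wh K_0(\mathcal O)$.
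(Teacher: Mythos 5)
Your proof is correct and takes essentially the same route as the paper: the identical isometry computation (right multiplication by $1\otimes u$) for well-definedness, and the same induction on rank via a coordinate-projection exact sequence of $\mathcal O$-lattices combined with Lemma \ref{l1} for generation. The only difference is that you explicitly verify the $A_\mathbb R$-stability of the orthogonal complement $(E'_\mathbb R)^\perp$, a point the paper's proof leaves implicit.
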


\begin{proof}
The application $[\ol{\frak a}]\mapsto [L(\ol{\frak a})]$ is independent of the choice of the representative $\ol{\frak a}$ of $[\ol{\frak a}]\in\widehat{Cl}(\mathcal O).$ Indeed, if $\ol{\frak b}=\ol{\frak a}u,$ $u\in A^\times,$ is another element in the complete ideal class of $\ol{\frak a},$ then for all $x,y\in A_\mathbb R$ we have
\begin{equation*}
\begin{split}
h_{\ol{\frak b}}\left(x(1\otimes u),y(1\otimes u)\right)&= x(1\otimes u)\left((1\otimes u^{-1})\frak a_\infty\right)\left((1\otimes u^{-1})\frak a_\infty\right)^*\left(y(1\otimes u)\right)^*\\
&=x\frak a_\infty (\frak a_\infty)^*y^*\\
&=h_{\ol{\frak a}}(x,y).
\end{split}
\end{equation*}
Hence the two line bundles $L(\ol{\frak a})$ and $L(\ol{\frak b})$ are isomorphic and in particular $[L(\ol{\frak a})]=[L(\ol{\frak b})].$

It remains to prove that the elements $[L(\ol{\frak a})]$ generate the Grothendieck group $\wh K_0(\mathcal O).$ Let $[\ol E]\in\wh K_0(\mathcal O).$ By definition, $A\otimes_\mathcal OE$ is a free $A$-module of finite rank $n$ say. If $(b_1,\dots,b_n)$ is a basis of $A\otimes_\mathcal OE,$ then for every $x\in E,$ the element $1\otimes x\in A\otimes_\mathcal OE$ is uniquely expressible in the form $1\otimes x=a_1b_1+\dots+a_nb_n$ with $a_i\in A.$ Let $\frak a_i$ be the set of all coefficients $a_i$ which occur as $x$ ranges over all elements of $E.$ This is a finitely generated left $\mathcal O$-submodule of $A.$ Since
$$Ab_1+\dots+Ab_n=A\otimes_\mathcal OE=A(\frak a_1b_1+\dots+\frak a_nb_n)=A\frak a_1b_1+\dots+A\frak a_nb_n,$$
it follows that $A\frak a_i=A$ for every $i,$ so each $\frak a_i$ is a full left $\mathcal O$-ideal in $A.$

We write $F=\frak a_1\oplus\dots\oplus\frak a_{n-1}$ and consider the short exact sequence
$$0\rightarrow F\rightarrow E\rightarrow\frak a_n\rightarrow 0$$
of $\mathcal O$-modules. This sequence becomes an exact sequence of arithmetic vector bundles on $\Spec\mathcal O,$ if we restrict the metric on $E_\mathbb R$ to $F_\mathbb R,$ and if we endow $\frak a_\mathbb R=A_\mathbb R$ with the metric, which is induced by the isomorphism $F_\mathbb R^\perp\cong A_\mathbb R.$ But Lemma \ref{l1} tells us that every hermitian metric on $A_\mathbb R$ is of the form $h(x,y)=x\alpha\alpha^*y^*$ for some $\alpha\in\left(A_\mathbb R\right)^\times.$ Hence $[\ol E]=[\ol F]+\left[L\left((\frak a,\alpha)\right)\right].$ By induction on the rank, we get the desired decomposition of $[\ol E].$
\end{proof}

On the one hand we have the homomorphism $\rk:\wh K_0(\mathcal O)\map\mathbb Z$ defined by $\rk([\ol E])=\rk_A(A\otimes_\mathcal OE);$ on the other hand the inclusion $i:R\hookrightarrow\mathcal O$ induces a group homomorphism
$$i_*:\wh K_0(\mathcal O)\rightarrow \wh K_0(R),\; [(E,h)]\mapsto\left[\left(E,\tr_{A_\mathbb R\mid K_\mathbb R}\circ h\right)\right].$$
Both are used in

\begin{theorem}\label{adeg}
Let $\Spec\mathcal O$ be a noncommutative arithmetic curve. There is a unique homomorphism
$$\adeg_\mathcal O:\wh K_0(\mathcal O)\rightarrow\mathbb R$$
which extends the map $\deg\circ\wdiv:\widehat{Cl}(\mathcal O)\rightarrow\mathbb R,$ i.e., which satisfies $\adeg_\mathcal O([L(\ol{\frak a})])=\deg(\wdiv[\ol{\frak a}])$ for all $[\ol{\frak a}]\in\widehat{Cl}(\mathcal O).$ This homomorphism is given by
\begin{equation}\label{adeg1}
\adeg_\mathcal O([\ol E])=\adeg_K(i_*[\ol E])-\rk([\ol E])\adeg_K(i_*[\ol{\mathcal O}]).
\end{equation}
and is called the \emph{arithmetic degree map.}

Furthermore, if $K=\mathbb Q$ then $\adeg_K(i_*[\ol{\mathcal O}])=\chi(\ol{\mathcal O}).$
\end{theorem}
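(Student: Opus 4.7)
For uniqueness, I would invoke Theorem~\ref{groto}: since the classes $[L(\ol{\frak a})]$ as $[\ol{\frak a}]$ ranges over $\widehat{Cl}(\mathcal O)$ generate $\wh K_0(\mathcal O),$ any group homomorphism $\wh K_0(\mathcal O)\map\mathbb R$ whose restriction to these generators equals $\deg\circ\wdiv$ is determined on all of $\wh K_0(\mathcal O).$

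For existence, I would take the right-hand side of (\ref{adeg1}) as the definition of $\adeg_\mathcal O.$ Since $\rk,$ $i_*,$ and the commutative arithmetic degree $\adeg_K$ on $\wh K_0(R)$ are all group homomorphisms, the defining expression is itself a homomorphism, so it remains only to verify the extension property $\adeg_\mathcal O([L(\ol{\frak a})])=\deg(\wdiv[\ol{\frak a}])$ on every generator. Since $L(\ol{\frak a})$ and $\ol{\mathcal O}$ both have $A$-rank one, this reduces to $\adeg_K(i_*[L(\ol{\frak a})])-\adeg_K(i_*[\ol{\mathcal O}])=\deg(\wdiv[\ol{\frak a}]).$ I would rewrite each $\adeg_K$ on the left via the commutative Riemann-Roch formula on $\Spec R,$ namely $\adeg_K(i_*[\ol F])=\chi(i_*[\ol F])-\rk_R(F)\chi(\ol R);$ since $\frak a$ and $\mathcal O$ are full $R$-lattices in $A$ of the same $R$-rank, the $\chi(\ol R)$-corrections cancel in the difference. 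Substituting the noncommutative Riemann-Roch formula $\deg(\wdiv[\ol{\frak a}])=\chi(\ol{\frak a})-\chi(\ol{\mathcal O})$ then reduces the identity to the compatibility $\chi(i_*[\ol{\frak b}])=\chi(\ol{\frak b})$ for every complete $\mathcal O$-ideal $\ol{\frak b}.$

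This compatibility is the key geometric input. By transitivity of the trace, $\tr_{K_\mathbb R\mid\mathbb R}\circ\tr_{A_\mathbb R\mid K_\mathbb R}=\tr_{A_\mathbb R\mid\mathbb R},$ so the $\mathbb R$-valued positive definite form on $A_\mathbb R$ induced from the $K_\mathbb R$-valued metric $\tr_{A_\mathbb R\mid K_\mathbb R}\circ h_{\ol{\frak b}}$ is $(x,y)\mapsto\tr_{A_\mathbb R\mid\mathbb R}(x\frak b_\infty\frak b_\infty^*y^*).$ Using that the positive involution $*$ is an $\mathbb R$-linear involution of $A_\mathbb R$ with determinant $\pm 1,$ the Haar measures arising from $\tr(xy)$ and from $\tr(xy^*)$ coincide; a direct change of variables then matches the volume of $j(\frak b)$ under the induced Euclidean structure with the volume of $\frak b_\infty\cdot j(\frak b)$ under the canonical measure, which is $\vol(\ol{\frak b})$ by definition.

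For the final assertion, when $K=\mathbb Q$ we have $R=\mathbb Z$ and $\chi(\ol R)=0,$ so the commutative Riemann-Roch formula simplifies to $\adeg_\mathbb Q(\ol F)=\chi(\ol F);$ hence $\adeg_\mathbb Q(i_*[\ol{\mathcal O}])=\chi(i_*[\ol{\mathcal O}])=\chi(\ol{\mathcal O})$ by the compatibility just established. The principal technical obstacle is precisely this volume-compatibility identity $\chi(i_*[\ol{\frak b}])=\chi(\ol{\frak b});$ everything else is formal manipulation of homomorphisms together with the two Riemann-Roch identities.
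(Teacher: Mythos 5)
Your proposal is correct, and while its skeleton (uniqueness from Theorem~\ref{groto}, existence by defining $\adeg_\mathcal O$ via (\ref{adeg1}) and checking it on the generators $[L(\ol{\frak a})]$) matches the paper, your verification on generators takes a genuinely different route. The paper normalizes to $\frak a\subset\mathcal O$ and computes $\adeg_K(i_*[L(\ol{\frak a})])$ directly: the index contributes $-\log\sharp(\mathcal O/\frak a)=-\log\frak N(\frak a)$ by Theorem~\ref{pf}, and the passage from the metric $xy^*$ to $x\frak a_\infty\frak a_\infty^*y^*$ contributes $-\log|N_{A_\mathbb R\mid\mathbb R}(\frak a_\infty)|$ by Lemma~\ref{endos}; this is then matched against $\deg(\wdiv[\ol{\frak a}])=-\log\frak N(\ol{\frak a})$ from diagram (\ref{1}). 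You instead apply the commutative Riemann--Roch formula on $\Spec R$ to both $\adeg_K$ terms and the noncommutative Riemann--Roch formula of Section~3 to $\deg(\wdiv[\ol{\frak a}])$, reducing everything to the single compatibility $\chi(i_*[\ol{\frak b}])=\chi(\ol{\frak b})$; since that Riemann--Roch theorem is proved earlier and independently, there is no circularity, and your route buys a cleaner separation of the formal bookkeeping from the one genuine volume computation (it also yields the final assertion $\adeg_\mathbb Q(i_*[\ol{\mathcal O}])=\chi(\ol{\mathcal O})$ as the special case $\frak b_\infty=1$, where the paper again invokes Lemma~\ref{endos}). The price is that your key identity needs a slightly more careful proof than you sketch: the change of variables turns the Gram matrix of $\tr_{A_\mathbb R\mid\mathbb R}\circ h_{\ol{\frak b}}$ on $j(\frak b)$ into the Gram matrix of the twisted trace form on the lattice $j(\frak b)\cdot\frak b_\infty$ (right multiplication), whereas $\vol(\ol{\frak b})$ is defined via $\frak b_\infty\cdot j(\frak b)$ (left multiplication) and the untwisted trace form. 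So besides $|\det_\mathbb R(*)|=1$ you should also note that left and right multiplication by $\frak b_\infty$ on the semisimple algebra $A_\mathbb R$ have the same absolute determinant, namely $|N_{A_\mathbb R\mid\mathbb R}(\frak b_\infty)|$; with that observation the argument closes and the two proofs compute the same covolume.
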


The theorem states that $\adeg_\mathcal O$ behaves exactly like the arithmetic degree map over number fields. Specifically, equation (\ref{adeg1}) generalizes the Riemann-Roch formula for hermitian vector bundles on commutative arithmetic curves, cf. [6, (III.8.2)]. To prove the theorem, we need a lemma from the theory of commutative arithmetic curves, which certainly is well-known. But as we could not find a proof of it in the literature, we give a proof here.

\begin{lemma}\label{endos}
Let $\ol E=(E,h)$ be a hermitian vector bundle on a commutative arithmetic curve $\Spec R,$ and let $\phi,\psi:E_\mathbb R\map E_\mathbb R$ be two endomorphisms. Then
\begin{equation}\label{endos1}
\adeg\left(E,h\circ(\phi\oplus\psi)\right)=\adeg(\ol E)-\ts{\frac{1}{2}}\log\left|\det_\mathbb R(\phi)\right|-\ts{\frac{1}{2}}\log\left|\det_\mathbb R(\psi)\right|.
\end{equation}
\end{lemma}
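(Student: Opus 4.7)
The plan is to reduce the lemma to a single determinantal identity about the Gram matrix of a bilinear form. The starting observation is that in the commutative setup (essentially Neukirch's) the arithmetic degree of a hermitian vector bundle on $\Spec R$ can be written as
$$\adeg(E,h)=\chi(E,h)-\rk(E)\chi(\ol R),$$
with $\chi(E,h)=-\log\vol(E,h),$ where the volume is computed with respect to the Euclidean structure on $E_\mathbb R$ induced by $\tr_{K_\mathbb R\mid\mathbb R}\circ h.$ Since neither $\rk(E)$ nor $\chi(\ol R)$ is affected by replacing $h$ with $h\circ(\phi\oplus\psi),$ equation (\ref{endos1}) reduces to the volume identity
$$\vol\bigl(E,h\circ(\phi\oplus\psi)\bigr)=|\det_\mathbb R(\phi)|^{1/2}|\det_\mathbb R(\psi)|^{1/2}\,\vol(E,h).$$

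To establish this volume identity, I would fix a $\mathbb Z$-basis of the full $\mathbb Z$-lattice $E\subset E_\mathbb R$ and compare Gram matrices. Letting $G$ denote the Gram matrix of $\tr_{K_\mathbb R\mid\mathbb R}\circ h$ in that basis and $\Phi,\Psi$ the real matrices representing $\phi,\psi$ in the same basis, a direct expansion of $\tr(h(\phi e_i,\psi e_j))$ yields the new Gram matrix $\Phi^{t}G\Psi.$ Its absolute determinant is therefore $|\det_\mathbb R(\phi)|\cdot|\det G|\cdot|\det_\mathbb R(\psi)|,$ and since the covolume of a full $\mathbb Z$-lattice with respect to a nondegenerate bilinear form equals $|\det|^{1/2}$ of the corresponding Gram matrix, taking square roots gives the desired volume identity. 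Taking $-\log$ then delivers (\ref{endos1}).

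The step that needs the most care is the first one: the bilinear form $h\circ(\phi\oplus\psi)$ is in general neither hermitian nor positive definite, so strictly speaking $(E,h\circ(\phi\oplus\psi))$ is not a hermitian vector bundle in the sense already defined, and one first has to extend $\vol$ (hence $\chi$ and $\adeg$) to this slightly larger class of $\mathbb R$-bilinear forms via the natural definition $\vol:=|\det G|^{1/2}.$ Once this bookkeeping is in place the computation above is entirely mechanical; the substantive content of the lemma is really the transformation rule $G\mapsto\Phi^{t}G\Psi$ for the Gram matrix under a non-orthogonal change of the two slots, combined with the fact that the additive ``correction'' $\rk(E)\chi(\ol R)$ in the definition of $\adeg$ is invisible to perturbations of the metric.
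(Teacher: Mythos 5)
Your proposal is correct and follows essentially the same route as the paper: reduce via the Riemann--Roch formula $\adeg(\ol E)=\adeg(i_*\ol E)-\rk(E)\adeg(i_*\ol R)$ to a statement about the push-forward (your $\chi$ is exactly $\adeg(i_*\ol E)=-\tfrac{1}{2}\log|\det G|$ for the Gram matrix $G$ of $\tr_{K_\mathbb R\mid\mathbb R}\circ h$ in a $\mathbb Z$-basis of $E$), and then apply the transformation rule $G\mapsto\Phi^{t}G\Psi.$ Your remark that $h\circ(\phi\oplus\psi)$ need not be hermitian or positive definite, so that $\adeg$ must be extended by the formula $|\det G|^{1/2}$, is a legitimate bookkeeping point that the paper leaves implicit.
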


\begin{proof}
Following Neukirch [6, III.7], we let $i_*\ol E$ denote the hermitian vector bundle on $\Spec\mathbb Z$ obtained by push-forward the bundle $\ol E.$ The Riemann-Roch formula for hermitian vector bundles on arithmetic curves [6, (III.8.2)] asserts
\begin{equation}\label{endos2}
\adeg(\ol E)=\adeg(i_*\ol E)-\rk(E)\adeg(i_*\ol R).
\end{equation}
We recall
\begin{equation}\label{endos3}
\adeg(i_*\ol E)=-\ts{\frac{1}{2}}\log\left|\det\left[\tr_{K_\mathbb R\mid\mathbb R}\circ h(x_i,x_j)\right]_{1\le i,j\le r}\right|,
\end{equation}
where $x_1,\dots,x_r$ is any $\mathbb Z$-basis of $E.$ On the other hand it is an easy exercise in linear algebra to show
\begin{equation*}
\det\left[\tr_{K_\mathbb R\mid\mathbb R}\circ h\left(\phi(x_i),\psi(x_j)\right)\right]_{1\le i,j\le r}=\det(\phi)\det(\psi)\det\left[\tr_{K_\mathbb R\mid\mathbb R}\circ h(x_i,x_j)\right]_{1\le i,j\le r},
\end{equation*}
which in combination with (\ref{endos3}) yields
$$\adeg\left(i_*(E,h\circ(\phi\oplus\psi))\right)=-\ts{\frac{1}{2}}\log\left|\det(\phi)\right|-\ts{\frac{1}{2}}\log\left|\det(\psi)\right|+\adeg\left(i_*\ol E\right).$$
Putting this into the Riemann-Roch formula (\ref{endos2}) establishes (\ref{endos1}).
\end{proof}

\begin{proof}[Proof of Theorem \ref{adeg}]
By Theorem \ref{groto}, the Grothendieck group $\wh K_0(\mathcal O)$ is generated by the elements $[L(\ol{\frak a})],$ $[\ol{\frak a}]\in\widehat{Cl}(\mathcal O),$ hence a homomorphism $\wh K_0(\mathcal O)\rightarrow\mathbb R$ for which the restriction to $\widehat{Cl}(\mathcal O)$ coincide with the map $\deg\circ\wdiv,$ is uniquely determined.

Both, $\widehat{\deg}_K\circ i_*$ and $\rk$ are homomorphisms from $\wh K_0(\mathcal O)$ to $\mathbb R,$ therefore their sum is a homomorphism as well. It thus remains to show that all $[\ol{\frak a}]\in\widehat{Cl}(\mathcal O)$ satisfy
\begin{equation}\label{ad3}
\widehat{\deg}_K\left(i_*\left[L(\ol{\frak a})\right]\right)-\wh{\deg}_K\left(i_*\mathcal O\right)=\deg\left(\wdiv\left[\ol{\frak a}\right]\right)
\end{equation}
For every nonzero $r\in R,$ $\left[L(\ol{\frak a}r)\right]=\left[L(\ol{\frak a})\right],$ so we may assume $\frak a\subset\mathcal O.$ Then
\begin{equation}\label{ad1}
\begin{split}
\adeg_K\left(i_*\left[L(\ol{\frak a})\right]\right)&=\adeg_K\left(\left[\frak a,\tr_{A_\mathbb R\mid K_\mathbb R}\circ h_{\ol{\frak a}}\right]\right)\\
&=-\log\sharp(\mathcal O/\frak a)+\adeg_K\left(\mathcal O,\tr_{A_\mathbb R\mid K_\mathbb R}\circ h_{\ol{\frak a}}\right)\\
&=-\log\frak N(\frak a)-\log\left|N_{A_\mathbb R\mid\mathbb R}(\frak a_\infty)\right|+\adeg_K\left(i_*\ol{\mathcal O}\right).
\end{split}
\end{equation}
The last equality follows from Theorem \ref{pf} and Lemma \ref{endos} using $N_{A_\mathbb R\mid\mathbb R}(\frak a_\infty)=\det_\mathbb R\left(x\mapsto x\frak a_\infty\right).$ On the other hand the commutative diagram (\ref{1}) tells us
\begin{equation}\label{ad5}
\deg\left(\wdiv\left[\ol{\frak a}\right]\right)=-\log\frak N(\ol{\frak a})=-\log\left|N_{A_\mathbb R\mid\mathbb R}(\frak a_\infty)\frak N(\frak a)\right|.
\end{equation}
Combining (\ref{ad1}) and (\ref{ad5}) yields (\ref{ad3}).

The formula $\wh{\deg}_\mathbb Q\left(i_*[\ol{\mathcal O}]\right)=\chi(\ol{\mathcal O})$ follows from Lemma \ref{endos} using $\left|\det_\mathbb R(*)\right|=1.$
\end{proof}

\section{Heights and duality}
The height is a concept which is strongly related to the arithmetic degree. For number fields this is classical. We briefly resume the basic facts. Let $K$ be a number field of degree $d$ over $\mathbb{Q}$ and let $\rho:K\rightarrow\mathbb R^d$ be the embedding of $K$ in the euclidean space $\mathbb R^d$ given by
$$\rho(x)=(\sigma_1(x),\dots,\sigma_r(x),\operatorname{Re}\tau_1(x),\operatorname{Im}\tau_1(x),\dots,\operatorname{Re}\tau_s(x),\operatorname{Im}\tau_s(x)),$$
where $\sigma_1,\dots,\sigma_r:K\rightarrow\mathbb R$ are the real embeddings and $\tau_1,\ol{\tau}_1,\dots,\tau_s,\ol{\tau}_s:K\rightarrow\mathbb C$ the pairs of complex-conjugate embeddings of $K$ in $\mathbb C.$ Using this notation, Schmidt [8, Chapter 3] defines the height of an $m$-dimensional subspace $V\subset K^n$ to be
\begin{equation}\label{height}
H(V)=\left(2^s|\Delta_K|^{1/2}\right)^{-m}\det\left(\rho^n(V\cap R^n)\right),
\end{equation}
where $\Delta_K$ is the discriminant and $R$ the ring of integers of $K.$

To describe the relation to the arithmetic degree, we let $K_\mathbb R$ denote the Minkowski space of $K$ and $*$ the canonical involution on $K_\mathbb R.$ Recall that the Minkowski space of $K$ can be identified with the real vector space $K\otimes_\mathbb Q\mathbb R.$  We call the hermitian form
$$h_n:K_\mathbb R^{n}\times K_\mathbb R^{n}\rightarrow K_\mathbb R^n,\; ((x_{1},\dots,x_{n}),(y_{1},\dots,y_{n}))\mapsto x_{1}y_{1}^*+\dots+x_{n}y_{n}^*$$
the canonical metric on $K_\mathbb R^n.$ With its help the height of a subspace $V$ of $K^n$ can be expressed as an arithmetic degree:
$$\log H(V)=-\widehat{\deg}_K(V\cap R^{n},h_n).$$

Since the arithmetic degree is also available when working with semisimple $K$-algebras, this observation provides a way to make a definition in a more general setting. Let $\mathcal O$ be an $R$-order in a finite dimensional semisimple $K$-algebra $A.$ Fix a positive involution $*$ on $A_\mathbb R=\mathbb R\otimes_\mathbb QA$ and let $h_n$ be the canonical metric on $A_\mathbb R^n,$ that is
$$h_n(x,y)=x_{1}y_{1}^*+\dots+x_{n}y_{n}^*\quad\text{for all }x,y\in A_\mathbb R^n.$$
Then the \emph{height} $H_\mathcal O(V)$ of a free $A$-submodule $V\subset A^{n}$ is defined by
$$\log H_\mathcal O(V)=-\widehat{\deg}_{\mathcal{O}}(V\cap\mathcal{O}^{n},h_n).$$

Note that the height does not depend on the chosen positive involution $*.$ We now explain why. Let $x\mapsto x'$ and $x\mapsto x^*$ be two positive involutions on the semisimple real algebra $A_\mathbb R$ and let $h'=\tr_{A_\mathbb R\mid\mathbb R}\circ h'_n,$ $h^*=\tr_{A_\mathbb R\mid\mathbb R}\circ h^*_n$ be the associated scalar products on the real vector space $A_\mathbb R.$ By definition of positive involutions, the restrictions of these involutions to each simple component $A_i$ of $A_\mathbb R$ are again (positive) involutions. Thus the restriction of the two involutions $x\mapsto x'$ and $x\mapsto x^*$ to $A_i$ differ by an automorphism of $A_i.$ Hence it follows from the Skolem-Noether Theorem, cf. [7, (7.21)], that there is some $u_i\in A_i^\times$ such that for all $y_i\in A_i,$ $y'_i=u_i^{-1}y_i^*u_i.$ This implies that for all $x,y\in A_\mathbb R,$
$$\tr_{A_\mathbb R\mid\mathbb R}(xy')=\sum_i \tr_{A_i\mid\mathbb R}(x_iy'_i)=\sum_i \tr_{A_i\mid\mathbb R}(x_iu_i^{-1}y_i^*u_i)=\sum_i \tr_{A_i\mid\mathbb R}(x_iy_i^*)=\tr_{A_\mathbb R\mid\mathbb R}(xy^*),$$
whence $h'=h^*.$ Applying Theorem \ref{adeg}, this leads to
\begin{equation}\label{height2}
\begin{split}
\log H_{\mathcal O,'}(V)&=-\adeg_\mathcal O(V\cap\mathcal O^n,h'_n)\\
&=-\adeg_\mathbb Q(V\cap\mathcal O^n,h')+\rk(V)\chi(\mathcal O)\\
&=-\adeg_\mathbb Q(V\cap\mathcal O^n,h^*)+\rk(V)\chi(\mathcal O)\\
&=\log H_{\mathcal O,*}(V),
\end{split}
\end{equation}
which proves that the height $H_\mathcal O$ does indeed not depend on the chosen involution $*.$

Equation (\ref{height2}) also shows that if $\vol$ denotes the Haar measure on the real vector space $A_\mathbb R^n$ induced by the scalar product $\tr_{A_\mathbb R\mid\mathbb R}\circ h_n,$  then the height $H_\mathcal O(V)$ of a free $A$-submodule $V$ of $A^n$ can be expressed in terms of volumes of lattices:
\begin{equation}\label{height3}
H_\mathcal O(V)=\frac{\vol(V\cap\mathcal O^n)}{\vol(\mathcal O)^{\rk(V)}}.
\end{equation}
Hence, in the case where $A$ is a division algebra, our height and the height introduced by Liebend\"orfer and R\'emond [5] coincide (up the normalization by the degree $[A:\mathbb Q]$).

It is an important property of the classical height that it satisfies duality, i.e., that the height of a subspace $V$ of $K^n$ equals the height of its orthogonal complement $V^\perp\subset K^n.$ Liebend\"orfer and R\'emond [5] have established duality for their height $H^{\mathcal O,\ast}$ under the assumption that $\mathcal O$ is a maximal order in a finite dimensional rational division algebra. To formulate our duality theorem, we have to introduce some notation.

As usual we let $\mathcal O$ be a $\mathbb Z$-order in a finite dimensional semisimple $\mathbb Q$-algebra $A$ and $h_n$ the canonical metric on $A_\mathbb R^n$ associated to a fixed positive involution $*$ on the semisimple real algebra $A_\mathbb R=A\otimes_\mathbb Q\mathbb R.$ Moreover, we let $\widetilde{\mathcal O}$ denote the lattice dual to $\mathcal O$ with respect to the twisted trace form $\tau:A_\mathbb R\times A_\mathbb R\map\mathbb R,$ $(x,y)\mapsto\tr_{A_\mathbb R\mid\mathbb R}(xy^*),$ and in analogy with (\ref{height3}), we define the height $H_{\widetilde{\mathcal O}}(V)$ of a free $A$-submodule $V$ of $A^n$ as
\begin{equation}\label{height4}
H_{\widetilde{\mathcal O}}(V)=\frac{\vol(V_\mathbb R\cap\widetilde{\mathcal O}^n)}{\vol(\widetilde{\mathcal O})^{\rk(V)}}.
\end{equation}
Finally, the orthogonal complement $V^\perp$ of a free $A$-submodule $V$ of $A^n$ is computed with respect to the canonical metric $h_n$ restricted to $A^n,$ that is, $V^\perp=\{x\in A^n\mid h_n(x,y)=0\text{ for all }y\in V\}.$

\begin{theorem}\label{dual-theorem}
If the twisted trace form $\tau:A_\mathbb R\times A_\mathbb R\map\mathbb R$ assumes integral values on $\mathcal O\times\mathcal O,$ then every free $A$-submodule $V$ of $A^n$ satisfies
\begin{equation}\label{dt1}
H_\mathcal O(V)=H_{\widetilde{\mathcal O}}(V^\perp).
\end{equation}
If, moreover, $\mathcal O$ is a \emph{maximal} order in $A,$ then
\begin{equation}\label{dt2}
H_\mathcal O(V)=H_{\widetilde{\mathcal O}}(V),
\end{equation}
and in particular,
\begin{equation*}
H_\mathcal O(V)=H_{\mathcal O}(V^\perp).
\end{equation*}
\end{theorem}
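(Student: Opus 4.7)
The plan is to establish (\ref{dt1}) via classical lattice duality in the Euclidean space $(A_\mathbb R^n,\tau_n)$, where $\tau_n:=\tr_{A_\mathbb R\mid\mathbb R}\circ h_n$, then to derive (\ref{dt2}) from a scaling argument that exploits maximality of $\mathcal O$, and finally to obtain $H_\mathcal O(V)=H_\mathcal O(V^\perp)$ by composing the two.

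For (\ref{dt1}) I first check that the $A$-orthogonal complement $V^\perp$ (defined via $h_n$) coincides with the $\mathbb R$-orthogonal complement of $V_\mathbb R$ in $A_\mathbb R^n$ taken with respect to $\tau_n$. One inclusion is immediate from $\tr(0)=0$; conversely, if $\tau_n(x,V_\mathbb R)=0$, then for every $y\in V_\mathbb R$ and every $a\in A_\mathbb R$ one has $ay\in V_\mathbb R$, whence $0=\tau_n(x,ay)=\tr(h_n(x,y)a^*)$, and varying $a$ together with nondegeneracy of the trace pairing on $A_\mathbb R$ forces $h_n(x,y)=0$. Under the integrality hypothesis, $\mathcal O^n\subset\widetilde{\mathcal O}^n$ and $\widetilde{\mathcal O}^n$ is precisely the $\tau_n$-dual of $\mathcal O^n$. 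Now the classical lattice duality formula
$$\vol(V_\mathbb R\cap\mathcal O^n)=\vol(\mathcal O)^n\cdot\vol(V_\mathbb R^\perp\cap\widetilde{\mathcal O}^n),$$
combined with the Euclidean dual-lattice identity $\vol(\mathcal O)\vol(\widetilde{\mathcal O})=1$, turns the ratio $H_\mathcal O(V)/H_{\widetilde{\mathcal O}}(V^\perp)$ into $\bigl(\vol(\mathcal O)\vol(\widetilde{\mathcal O})\bigr)^{n-\rk V}=1$.

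For (\ref{dt2}) the additional maximality hypothesis is used to promote $\widetilde{\mathcal O}$ to a two-sided invertible fractional $\mathcal O$-ideal (essentially the inverse different, twisted by $*$). Granting this, for every free $A$-submodule $V\subset A^n$ one establishes
$$V_\mathbb R\cap\widetilde{\mathcal O}^n=\widetilde{\mathcal O}\cdot(V_\mathbb R\cap\mathcal O^n);$$
the nontrivial inclusion "$\subseteq$" follows by writing $1=\sum_k u_kv_k$ with $u_k\in\widetilde{\mathcal O}$, $v_k\in\widetilde{\mathcal O}^{-1}$, and noting that each $v_kx$ lies in $V_\mathbb R\cap\mathcal O^n$ whenever $x$ lies in the intersection. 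Reducing an arbitrary projective left $\mathcal O$-lattice of rank $m$ to the standard form $\mathcal O^{m-1}\oplus J$ via the structure theorem for lattices over maximal orders yields the covolume identity
$$\vol(V_\mathbb R\cap\widetilde{\mathcal O}^n)=\bigl(\vol(\widetilde{\mathcal O})/\vol(\mathcal O)\bigr)^{\rk V}\vol(V_\mathbb R\cap\mathcal O^n),$$
and this factor cancels precisely with the normalization shift from $\vol(\mathcal O)^{\rk V}$ to $\vol(\widetilde{\mathcal O})^{\rk V}$ in the definition of $H_{\widetilde{\mathcal O}}(V)$, yielding $H_\mathcal O(V)=H_{\widetilde{\mathcal O}}(V)$. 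The remaining assertion $H_\mathcal O(V)=H_\mathcal O(V^\perp)$ is then the composition of (\ref{dt1}) and (\ref{dt2}).

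The main obstacle is the identification of $\widetilde{\mathcal O}$ as a two-sided invertible $\mathcal O$-ideal together with the accompanying volume-scaling formula for projective $\mathcal O$-lattices; both rely essentially on maximality (which guarantees that every $\mathcal O$-lattice is projective and that the inverse different is invertible) and on the integrality of $\tau$ on $\mathcal O\times\mathcal O$, and this is the step at which the hypotheses of (\ref{dt2}) genuinely enter the argument.
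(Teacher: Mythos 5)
Your argument for (\ref{dt1}) is essentially the paper's: you identify $V^\perp_\mathbb R$ with the orthogonal complement of $V_\mathbb R$ for the scalar product $\tr_{A_\mathbb R\mid\mathbb R}\circ h_n$ (you do this directly via nondegeneracy of the trace pairing, the paper by a dimension count), invoke the lattice--subspace duality formula $\vol(V_\mathbb R\cap\mathcal O^n)=\vol(\mathcal O)^n\vol(V_\mathbb R^\perp\cap\widetilde{\mathcal O}^n)$ (for which the paper cites Bertrand), and cancel using $\vol(\mathcal O)\vol(\widetilde{\mathcal O})=1$; that part is fine. For (\ref{dt2}) you take a genuinely different, global route where the paper localizes. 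Both proofs reduce to the index identity $[V_\mathbb R\cap\widetilde{\mathcal O}^n:V\cap\mathcal O^n]=[\widetilde{\mathcal O}:\mathcal O]^{\rk V}$, and both obtain it from $V_\mathbb R\cap\widetilde{\mathcal O}^n=\widetilde{\mathcal O}\cdot(V\cap\mathcal O^n)$; but the paper proves this prime by prime, using that $\mathcal O_p$ is a principal ideal ring when $\mathcal O$ is maximal [7, (18.10)], so that $V\cap\mathcal O_p^n$ is $\mathcal O_p$-free and the index drops out at once. You instead argue globally from the invertibility of $\widetilde{\mathcal O}$ as a two-sided fractional ideal and the Steinitz form $\mathcal O^{m-1}\oplus J$ of $V\cap\mathcal O^n$. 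This works, but it needs two inputs you only gesture at: the identity $[\widetilde{\mathcal O}J:J]=[\widetilde{\mathcal O}:\mathcal O]$ for a full left $\mathcal O$-ideal $J$, which is itself most easily verified locally where $J_p$ is principal (so localization creeps back in), and the assertion that $\widetilde{\mathcal O}$ is a two-sided $\mathcal O$-module, which for the \emph{twisted} trace form holds only when $\mathcal O$ is stable under the involution $*$ --- a tacit assumption that the paper's own argument also makes when it treats $\widetilde{\mathcal O}_p$ as a right $\mathcal O_p$-module, so I do not count it against you. On balance the paper's local argument is more economical, while yours makes the global module-theoretic structure behind the index identity explicit. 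The final deduction of $H_\mathcal O(V)=H_\mathcal O(V^\perp)$ by applying (\ref{dt2}) to $V^\perp$ and composing with (\ref{dt1}) is the same in both.
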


\begin{proof}
If $V'_\mathbb R$ denotes the orthogonal complement of $V_\mathbb R$ with respect to the scalar product $h=\tr_{A_\mathbb R\mid\mathbb R}\circ h_n$ and vol the volume on the real vector space $A_\mathbb R^n$ induced by $h,$ then it follows from a result [1, Proposition 1(ii)] of Bertrand that
\begin{equation}\label{A}
\vol(V_\mathbb R\cap\mathcal O^n)=\vol\left(V'_\mathbb R \cap\widetilde{\mathcal O}^n\right)\vol(\mathcal O^n),
\end{equation}
cf. the last equation of page 202 in [1]. We claim
\begin{equation}\label{B}
V'_\mathbb R=V_\mathbb R^\perp.
\end{equation}
Obviously, $V^\perp_\mathbb R$ is a subset of $V'_\mathbb R,$ and since $h$ restricted to $V_\mathbb R\times V_\mathbb R$ is nondegenerate, it follows $V'_\mathbb R \oplus V_\mathbb R=A_\mathbb R^n.$ But the non-degenracy of $h$ implies the non-degeneracy of $h_n$ restricted to $V_\mathbb R\times V_\mathbb R,$ whence $V'_\mathbb R\oplus V_\mathbb R=A_\mathbb R^n$ and thus $V^\perp_\mathbb R=V'_\mathbb R.$ Combining $\vol(\widetilde{\mathcal O})=\vol(\mathcal O)^{-1}$ with (\ref{height3}), (\ref{height4}), (\ref{A}) and (\ref{B}) establishes (\ref{dt1}).

Looking at (\ref{height3}) and (\ref{height4}) and again applying $\vol(\widetilde{\mathcal O})=\vol(\mathcal O)^{-1}$, one sees that (\ref{dt2}) is equivalent to
$$\vol(\mathcal O)^{\rk V}=\vol(V\cap\mathcal O^n) / \vol(V_\mathbb R\cap\widetilde{\mathcal O}^n).$$
But $\vol(V\cap\mathcal O^n)/\vol(V_\mathbb R\cap\widetilde{\mathcal O}^n)=[V_\mathbb R\cap\widetilde{\mathcal O}^n:V\cap\mathcal O^n]$ and $\vol(\mathcal O)=[\widetilde{\mathcal O}:\mathcal O],$ so we are done once
\begin{equation}\label{duality10}
 \left[V_\mathbb R\cap\widetilde{\mathcal O}^n:V\cap\mathcal O^n\right]=\left[\widetilde{\mathcal O}:\mathcal O\right]^{\rk V}
\end{equation}
is established.

We are going to prove (\ref{duality10}) locally. Since $\mathcal O$ is a \emph{maximal} $\mathbb Z$-order, it follows from [7, (18.10)] that for every prime ideal $p$ of $\mathbb Z$ the localization $\mathcal O_p=\mathcal O\otimes_\mathbb Z\mathbb Z_p$ of $\mathcal O$ at $p$ is a left and right principal ideal ring. Therefore, $\widetilde{\mathcal O}_p$ is a free right and $V\cap\mathcal O_p^n$ a free left $\mathcal O_p$-module. This implies
$$V_\mathbb R\cap\widetilde{\mathcal O}_p^n=\widetilde{\mathcal O}_p\left(V_\mathbb R\cap\mathcal O_p^n\right)=\widetilde{\mathcal O}_p\left(V\cap\mathcal O_p^n\right),$$
and thus,
$$\left[V_\mathbb R\cap\widetilde{\mathcal O}_p^n:V\cap\mathcal O_p^n\right]=\left[\widetilde{\mathcal O}_p\mathcal O_p^{\rk V}:\mathcal O_p^{\rk V}\right]=\left[\widetilde{\mathcal O}_p:\mathcal O_p\right]^{\rk V},$$
which establishes (\ref{duality10}) and completes the proof.
\end{proof}

For the sake of completeness, we note that if one works with the Haar measure $\vol'$ on $A_\mathbb R$ induced by the nondegenerate $\mathbb R$-bilinear form $b=\tr_{A_\mathbb R\mid\mathbb R}\circ b_n,$ where $b_n(x,y)=x_1y_1+\dots+x_ny_n,$ $x,y\in A_\mathbb R^n,$ and puts $H'_\mathcal O(V)=\vol'(V\cap\mathcal O^n)/\vol'(\mathcal O)^{\rk V},$ then the height $H'_\mathcal O$ satisfies duality whenever $\mathcal O$ is a maximal order; i.e., the assumption that the twisted trace form assumes integral values on $\mathcal O\times \mathcal O$ becomes needless.

\section*{Acknowledgments}
I would like to thank Gisbert W\"ustholz, J\"urg Kramer, Christine Liebend\"orfer and Ulf K\"uhn for interesting and helpful discussions and comments.

\end{document}